\numberwithin{equation}{section}
\def\XXint#1#2#3{{\setbox0=\hbox{$#1{#2#3}{\int}$ }
\vcenter{\hbox{$#2#3$ }}\kern-.6\wd0}}
\newtheorem{theorem}{Theorem}[section]
\newtheorem{corollary}{Corollary}
\newtheorem{lemma}[theorem]{Lemma}
\newtheorem{conjecture}[theorem]{Conjecture}
\theoremstyle{definition}
\newtheorem{remark}[theorem]{Remark}
\theoremstyle{proposition}
\title{Some Mizohata-Takeuchi-type estimate for exponential sums}
\author{Xuerui Yang}
\address{ Xuerui Yang\\
Department of Mathematics\\
University of Illinois at Urbana-Champaign\\
Urbana, IL, 61801, USA}
\email{xueruiy3@illinois.edu}
\begin{document}

\begin{abstract}
Let $R^{\frac{1}{2}}$ be a large integer, and $\omega$ be a nonnegative weight in the $R$-ball $B_R=[0,R]^2$ such that $\omega(B_R)\le R$. For any complex sequence $\{a_n\}$, define the quadratic exponential sum
\[
G(x,t)=\sum_{n=1}^{R^{\frac{1}{2}}} a_n e\big(\frac{n}{R^{\frac{1}{2}}} x+\frac{n^2}{R} t\big).
\]
It holds that   
\[
\int |G|^2 \omega \lessapprox \sup_{T}\omega(T)^{\frac{1}{2}}\cdot R \,\|a_n\|_{l^2}^2
\]
where $T$ ranges over $R\times R^{\frac{1}{2}}$ tubes in $B_R$. The proof is established through exploring the distributions of superlevel sets of the $G$ function. It is based on the $TT^*$ method and the circle method. 
\end{abstract}

\maketitle

\section{Introduction}

Given a convex $C^2$ function $\Gamma$ on $[0,1]$, define $E_\Gamma$ to be the extension operator for the curve $\big\{(\xi,\Gamma(\xi)):\xi \in [0,1]\big\}$. That is, given $g\in L^2([0,1])$, 
\[
E_\Gamma g(x,t)=\int_{[0,1]} e\big(x\xi +t\Gamma(\xi)\big) g(\xi) d\xi. 
\]
The Mizohata-Takeuchi Conjecture \cite[Conj. 1.5]{HC} states that 
\begin{conjecture} For any nonnegative weight $\omega$ in $B_R$, we have 
\[
\int_{B_R} |E_\Gamma g|^2 \omega \lessapprox \sup_{P} \omega(P)\cdot \|g\|_2^2  ,
\]
where $\omega(P)=\int_P \omega$, and $P$ is any infinite tube of width $1$ in the plane. 
\end{conjecture}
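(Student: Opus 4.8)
The plan combines a $TT^*$ reduction, uniform ($\omega$-free) estimates for $G$ coming from the arithmetic of the parabola, and a circle-method analysis of the superlevel sets of $G$. Normalize $\|a_n\|_{l^2}=1$, so that $|G|\le\sum_n|a_n|\le R^{1/4}$ pointwise and, by periodicity of $G$ and Parseval, $\int_{B_R}|G|^2=R^2$ (using $R^{1/2}\in\mathbb Z$), while the analogous computation yields $\int_{B_R}|G|^4\lesssim R^2$. Opening the square gives the $TT^*$ identity
\[
\int|G|^2\omega=\sum_{n,m}a_n\overline{a_m}\,\widehat\omega\!\Big(\tfrac{n-m}{R^{1/2}},\tfrac{n^2-m^2}{R}\Big),
\]
so the claim is equivalent to a bound on the operator norm of the matrix $\widehat\omega\big(\tfrac{n-m}{R^{1/2}},\tfrac{n^2-m^2}{R}\big)$. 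I would run the argument through the superlevel sets of $G$: since the contribution of $\{|G|\le1\}$ is $\le\omega(B_R)\le R^{1/2}\sup_T\omega(T)\le R\sup_T\omega(T)^{1/2}$ (partition $B_R$ into $R^{1/2}$ tubes; $\omega(T)\le R$), it suffices, after a dyadic decomposition, to prove for each of the $O(\log R)$ dyadic values $\mu\in(1,R^{1/4}]$ that
\[
\mu^2\,\omega(U_\mu)\lessapprox\sup_T\omega(T)^{1/2}\,R,\qquad U_\mu:=\{x\in B_R:\ |G(x)|\sim\mu\}.
\]

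The heart of the matter is a structural description of $U_\mu$, and this is where the circle method enters. The obstruction is that $G$ is, up to periodization, the extension of a sum of point masses on the parabola, hence carries no wave-packet localization: on each time slice it is a degree-$R^{1/2}$ trigonometric polynomial that genuinely concentrates into very thin spikes, so these spikes must be located by arithmetic. Farey-dissecting $t/R\in[0,1]$, on the major arc $\{|t/R-p/q|\lesssim(qR^{1/2})^{-1}\}$ the phase $e(n^2t/R)$ depends on $n$ only modulo $q$, so Weyl differencing $n=r+qs$ exhibits $G$ there as $\sum_{r=1}^{q}e(r^2\overline p/q)\,G_{r,q}$, each $G_{r,q}$ an exponential sum of length $\sim R^{1/2}/q$ on a rescaled parabola. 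Combining the Gauss-sum cancellation in the $r$-sum, the square-root size of a generic $G_{r,q}$, and the fact that an exponential sum of length $R^{1/2}/q$ is essentially constant along translates of an $R\times R^{1/2}$ tube whose direction is pinned by $(p,q)$, the portion of $U_\mu$ over the $q$-major arcs is nonempty only when $q\lesssim R^{1/2}\mu^{-2}$ and, for such $q$, is contained in a controlled number of $R\times R^{1/2}$ tubes — here the $L^2$ and $L^4$ bounds above enter to control the non-generic (spiky) contributions. Summing over $q\le R^{1/2}\mu^{-2}$ should give the structural lemma I would aim for: $U_\mu\subset\bigcup_{T\in\mathcal T_\mu}T$ for a family $\mathcal T_\mu$ of $R\times R^{1/2}$ tubes with $\#\mathcal T_\mu\lessapprox\min(R\mu^{-4},R^{1/2})$ (the second bound since $R^{1/2}$ tubes tile $B_R$), which after pigeonholing may be taken of bounded overlap.

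Granting this, the endgame is one Cauchy–Schwarz. Bounded overlap gives $\sum_{T\in\mathcal T_\mu}\omega(T)\lesssim\omega(B_R)\le R$, so $\sum_{T\in\mathcal T_\mu}\omega(T)^2\le\sup_T\omega(T)\sum_{T}\omega(T)\lesssim R\sup_T\omega(T)$, and hence
\[
\omega(U_\mu)\le\sum_{T\in\mathcal T_\mu}\omega(T)\le(\#\mathcal T_\mu)^{1/2}\Big(\sum_{T\in\mathcal T_\mu}\omega(T)^2\Big)^{1/2}\lessapprox(\#\mathcal T_\mu)^{1/2}\,R^{1/2}\,\sup_T\omega(T)^{1/2}.
\]
For $\mu\ge R^{1/8}$, inserting $\#\mathcal T_\mu\lessapprox R\mu^{-4}$ gives $\omega(U_\mu)\lessapprox R\mu^{-2}\sup_T\omega(T)^{1/2}$, hence $\mu^2\omega(U_\mu)\lessapprox R\sup_T\omega(T)^{1/2}$; for $1<\mu\le R^{1/8}$, inserting $\#\mathcal T_\mu\lesssim R^{1/2}$ gives $\omega(U_\mu)\lessapprox R^{3/4}\sup_T\omega(T)^{1/2}$, hence (using $\mu^2\le R^{1/4}$) $\mu^2\omega(U_\mu)\lessapprox R\sup_T\omega(T)^{1/2}$. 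Summing the $O(\log R)$ pieces completes the argument. The main obstacle is squarely the structural lemma: quantifying, after the $q^{-1/2}$ gain on each major arc, how many $R\times R^{1/2}$ tubes and with what multiplicity capture the spikes of $G$ at height $\mu$; this arithmetic input, in place of the missing wave-packet decomposition, is what carries the proof, and the bound $\#\mathcal T_\mu\lessapprox R\mu^{-4}$ is exactly calibrated so that the final Cauchy–Schwarz closes.
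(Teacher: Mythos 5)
There is a fundamental mismatch here: the statement you were asked to prove is the general Mizohata--Takeuchi conjecture --- an arbitrary nonnegative weight $\omega$, an arbitrary convex $C^2$ curve $\Gamma$, the bound $\sup_P\omega(P)\,\|g\|_2^2$ over \emph{width-one infinite} tubes $P$, and no normalization on $\omega$. The paper does not prove this statement; it records it as a conjecture, quoted from Cairo's paper, whose very title announces a counterexample to it, and the paper's Lemma \ref{counterex} further shows that even the one-dimensional variant for general convex $C^2$ curves (Conjecture \ref{MT general}) fails. What you actually sketch is the paper's Theorem \ref{main thm}: the special case where $E_\Gamma g$ is the quadratic exponential sum $G$, the weight satisfies $\omega(B_R)\le R$, the tubes are $R\times R^{1/2}$, and the right-hand side carries $\sup_T\omega(T)^{1/2}\cdot R\,\|a_n\|_{l^2}^2$. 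None of these changes is cosmetic --- the exponent $\tfrac12$ on $\omega(T)$, the tube width $R^{1/2}$ versus $1$, and the normalization $\omega(B_R)\le R$ are exactly what make the special case provable while the general conjecture is false --- so your argument, whatever its merits, does not address the stated conjecture and no argument of this shape could.

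Judged instead as a proof of Theorem \ref{main thm}, your outline is close in spirit to the paper (dyadic decomposition in the level $\mu$, trivial averaging over a tiling by horizontal tubes for $\mu\le R^{1/8}$, and the count $\lessapprox R\mu^{-4}$ of tubes meeting $\{|G|\sim\mu\}$ for $\mu\ge R^{1/8}$), but the load-bearing step --- your ``structural lemma'' $\#\mathcal T_\mu\lessapprox R\mu^{-4}$ --- is precisely the content of the paper's Theorem \ref{L4 thm} and Corollary \ref{dist of lvl sets}, and your sketch does not prove it. You gesture at a per-major-arc analysis of $G$ itself (Weyl differencing, Gauss-sum cancellation, ``generic'' square-root size of the shorter sums), but the non-generic spikes are exactly the difficulty, and ``a controlled number of tubes'' is never quantified. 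The paper's route is different and is where all the work lies: it reduces the level-set count to a sharp $L^4$ bound for $f=\sum a_ne(nx+n^2t)$ on a union of $N^{-1}\times N^{-2}$ boxes, one per horizontal strip, then by duality ($TT^*$) removes the coefficients and studies the kernel $K(x,t)=\sum_{n\le N}e(nx+n^2t)$, splitting it by the circle method: the minor-arc piece is handled by the Weyl bound $|K|\lessapprox N^{1/2}$, and each major-arc piece $K_Q$ by Bourgain's bound $\|K_Q\|_\infty\lesssim NQ^{-1/2}$ together with an incidence estimate for rationals with denominator $\sim Q$ (Lemma \ref{inci lemma}, proved via Ramanujan sums). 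Without an argument at that level of precision, your Cauchy--Schwarz endgame has nothing to act on; and even with it, you would have proved Theorem \ref{main thm}, not the conjecture stated.
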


Using refined decoupling theorem \cite[Thm 4.2]{GIOW}, A. Carbery, M. Iliopoulou and H. Wang \cite{CIW} showed that 
\begin{theorem}[CIW] For all weights $\omega: \mathbb R^2\to [0,\infty)$ we have 
\[
\int_{B_R} |E_\Gamma g|^2 \omega \lessapprox \sup_{T} \big(\int_{T} \omega^{\frac{3}{2}} \big)^{\frac{2}{3}}  \|g\|_{L^2}^2, 
\]   
where $T$ ranges over all $R\times R^{\frac{1}{2}}$ tubes. 
\end{theorem}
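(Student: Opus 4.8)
The plan is to follow \cite{CIW}: bring $g$ and $\omega$ to a normalised form, and then separate $|E_\Gamma g|^2$ from $\omega$ by the Hölder inequality $\int|E_\Gamma g|^2\omega\le\|E_\Gamma g\|_{L^6}^2\,\|\omega\|_{L^{3/2}}$, so that the geometric content of $E_\Gamma g$ (captured by $\ell^2L^6$–decoupling for planar curves) and the size of $\omega$ are estimated independently. First I would discretise: split $[0,1]$ into $\sim R^{1/2}$ caps $\theta$ of length $R^{-1/2}$, write $E_\Gamma g=\sum_\theta E_\Gamma g_\theta$, and decompose each $E_\Gamma g_\theta$ into wave packets supported on $R^{1/2}\times R$ tubes $T$ dual to $\theta$. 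By dyadic pigeonholing — losing only $R^{o(1)}$ — I may assume that all nonzero $\|g_\theta\|_{L^2}$ are comparable to some $\rho$ over a set of $N_1$ caps (so $\|g\|_{L^2}^2\sim N_1\rho^2$), that all nonzero wave-packet pieces have comparable $L^2$–norm $\sigma$ with $\sim s$ tubes per cap (so $\rho^2\sim s\sigma^2$), hence the total number of wave packets is $\mathcal T\sim N_1 s$ with $\mathcal T\sigma^2\sim\|g\|_{L^2}^2$; I also replace $\omega$ by its average over $R^{1/2}$–cubes, which only decreases each $\int_T\omega^{3/2}$ and leaves $\int|E_\Gamma g|^2\omega$ unchanged up to constants.

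Then I would pigeonhole over local multiplicity: for dyadic $\mu$, let $Y_\mu$ be the union of those $R^{1/2}$–cubes in $B_R$ that meet $\sim\mu$ of the wave-packet tubes; it is enough to bound $\int_{Y_\mu}|E_\Gamma g|^2\omega$ for a fixed $\mu$. Two estimates live on $Y_\mu$. First, the refined $\ell^2L^6$–decoupling theorem for $\Gamma$ \cite[Thm.~4.2]{GIOW}: since the caps are amplitude-balanced, restricting to cubes meeting $\lesssim\mu$ tubes improves the standard decoupling inequality by the factor $(\mu/N_1)^{1/2-1/6}=(\mu/N_1)^{1/3}$, which after inserting the wave-packet normalisation $\|E_\Gamma g_\theta\|_{L^6(B_R)}\sim s^{1/6}\sigma$ gives
\[
\|E_\Gamma g\|_{L^6(Y_\mu)}\ \lessapprox\ \Big(\tfrac{\mu}{N_1}\Big)^{1/3}\Big(\sum_\theta\|E_\Gamma g_\theta\|_{L^6(B_R)}^2\Big)^{1/2}\ \sim\ \mu^{1/3}\,\mathcal T^{1/6}\,\sigma .
\]
Second, an overlap count: every point of $Y_\mu$ lies in $\gtrsim\mu$ of the $\le\mathcal T$ tubes, so $\mu\,\mathbf 1_{Y_\mu}\le\sum_T\mathbf 1_T$ and hence
\[
\int_{Y_\mu}\omega^{3/2}\ \le\ \frac{1}{\mu}\sum_T\int_T\omega^{3/2}\ \le\ \frac{\mathcal T}{\mu}\,\sup_T\int_T\omega^{3/2}.
\]

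Combining the two with Hölder,
\[
\int_{Y_\mu}|E_\Gamma g|^2\omega\ \le\ \|E_\Gamma g\|_{L^6(Y_\mu)}^2\Big(\int_{Y_\mu}\omega^{3/2}\Big)^{2/3}\ \lessapprox\ \mu^{2/3}\mathcal T^{1/3}\sigma^2\cdot\Big(\tfrac{\mathcal T}{\mu}\Big)^{2/3}\Big(\sup_T\int_T\omega^{3/2}\Big)^{2/3}\ =\ \mathcal T\sigma^2\Big(\sup_T\int_T\omega^{3/2}\Big)^{2/3},
\]
and $\mathcal T\sigma^2\sim\|g\|_{L^2}^2$ gives exactly the desired bound for a single $\mu$; summing over the $O(\log R)$ values of $\mu$ (and over the other pigeonholed parameters) completes the proof, with the customary $R^{o(1)}$ loss.

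The delicate point — and the thing I expect to be the main obstacle — is the bookkeeping that makes the powers of $\mu$ cancel: the $\mu^{2/3}$ from squaring the refined-decoupling gain must cancel precisely the $\mu^{-2/3}$ produced by the overlap count for $\omega^{3/2}$, and this is exactly why the Hölder pair must be $L^6$ against $L^{3/2}$ — the curvature of $\Gamma$ enters only through planar $\ell^2L^6$–decoupling, and the exponent $3/2$ on $\omega$ is forced by the decoupling exponent $6$. Making sure the hypotheses of \cite[Thm.~4.2]{GIOW} are genuinely in force after pigeonholing (amplitude-balanced caps; $Y_\mu$ meeting $O(\mu)$ tubes per cube), keeping the wave-packet and $\omega$–averaging normalisations consistent, and controlling the rapidly decaying tails of the wave packets outside their tubes, are the remaining technical matters, which are by now fairly standard.
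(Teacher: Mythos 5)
You are proving a statement the paper itself does not prove: the CIW bound is quoted from \cite{CIW}, so I can only assess your argument on its own terms. The overall scheme (wave packets, pigeonholing into sets $Y_\mu$ of fixed tube-multiplicity, H\"older with $L^6$ against $L^{3/2}$, and the overlap count $\mu\,\mathbf 1_{Y_\mu}\le\sum_T\mathbf 1_T$) is indeed the right family of ideas, but the step on which everything hinges is misquoted, and with the correct statement your computation does not close. The refined decoupling theorem of \cite[Thm.~4.2]{GIOW} says, for $Y$ a union of $R^{1/2}$-cubes each meeting at most $M$ wave-packet tubes, that
$\|f\|_{L^6(Y)}\lessapprox M^{1/2-1/6}\big(\sum_{T\in\mathbb W}\|f_T\|_{L^6}^2\big)^{1/2}$,
i.e.\ an $\ell^2$ sum over the \emph{tubes}, with no reference to the number of caps. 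What you use instead is a ``relative'' form: a gain of $(\mu/N_1)^{1/3}$ over cap decoupling $\big(\sum_\theta\|E_\Gamma g_\theta\|_{L^6(B_R)}^2\big)^{1/2}$. Under your own balanced normalization ($N_1$ caps, $s$ tubes per cap, packet norm $\sigma$, $\mathcal T=N_1s$) the GIOW right-hand side is $\mu^{1/3}\mathcal T^{1/2}\sigma$, whereas your claimed bound is $\mu^{1/3}\mathcal T^{1/6}\sigma$ --- stronger by $\mathcal T^{1/3}$. If you substitute the actual GIOW statement into your chain, you get
$\int_{Y_\mu}|E_\Gamma g|^2\omega\lessapprox \mu^{2/3}\mathcal T\sigma^2\cdot(\mathcal T/\mu)^{2/3}\big(\sup_T\int_T\omega^{3/2}\big)^{2/3}=\mathcal T^{2/3}\,\|g\|_2^2\big(\sup_T\int_T\omega^{3/2}\big)^{2/3}$,
an uncancelled factor $\mathcal T^{2/3}$ (up to $R^{2/3}$). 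So the delicate point is not the cancellation of the $\mu$-powers, which works either way, but the tube-count powers: the wasteful step $\sum_T\int_{2T}\omega^{3/2}\le\mathcal T\sup_T\int_T\omega^{3/2}$ is not compensated unless refined decoupling is available in your strengthened per-cap ($\ell^6$-over-packets) form. That strengthened form is not \cite[Thm.~4.2]{GIOW}, you give no proof of it, and its validity is exactly the kind of ``low-multiplicity constructive interference'' question that is genuinely delicate; as written, your argument either relies on an unproved statement or, with the citable one, misses the target by a power of $R$. To repair it you would need to either establish the relative refined decoupling you invoke or rearrange the bookkeeping as in \cite{CIW}, which does not reduce to this one-line H\"older-plus-overlap count.

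Two smaller points. First, the preliminary claim that replacing $\omega$ by its averages over $R^{1/2}$-cubes ``leaves $\int|E_\Gamma g|^2\omega$ unchanged up to constants'' is false: $|E_\Gamma g|$ is locally constant only at unit scale (its Fourier support has diameter $O(1)$), and in the focusing example a unit-scale weight sees $|E_\Gamma g(0)|^2$ while the $R^{1/2}$-averaged weight sees the much smaller cube average; fortunately this step is inessential, since one can apply H\"older cube by cube without averaging $\omega$. Second, your evaluation $\|E_\Gamma g_\theta\|_{L^6(B_R)}\lesssim s^{1/6}\sigma$ is fine as an upper bound, but note it enters only the right-hand side of the (mis)quoted decoupling inequality, so fixing the main issue is what matters.
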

A. Ortiz considered a similar problem \cite{O} with the extension operator for the truncated cone in $\mathbb R^3$, and showed stronger Mizohata-Takeuchi-type estimates for one-dimensional weights in $\mathbb R^3$. 

For a one-dimensional weight in the plane, we expect a better estimate than Carbery-Iliopoulou-Wang's result.  
\begin{conjecture}  \label{MT general}
Let $\omega: B_R\to [0,\infty)$ be a one-dimensional weight, that is, 
\[
\omega\big(B(y,r) \big)\le r
\]
for any ball of radius $r$ centered at $y$, $1\le r\le R$. Then  
\begin{equation} 
\int_{B_R} |E_\Gamma g|^2 \omega \lesssim_\epsilon R^\epsilon \sup_{T}\omega(T)^{\frac{1}{2}} \|g\|_2^2,    
\end{equation}
where $T$ ranges over $R\times R^{\frac{1}{2}}$ tubes. 
\end{conjecture}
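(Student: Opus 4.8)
We sketch a strategy that would derive Conjecture~\ref{MT general} from the exponential-sum estimate of the abstract, which is proved in this paper. \emph{Reduction to the parabola.} By the standard decomposition-and-affine-rescaling argument of decoupling theory, and using that a one-dimensional weight on $B_R$ automatically obeys $\omega(B_R)\le\omega(B(c,R))\le R$ and $\sup_T\omega(T)\lesssim R$, it suffices to treat $\Gamma(\xi)=\xi^2$. The only delicate point is that $\Gamma$ is merely $C^2$: one subdivides $[0,1]$ finely enough that the quadratic Taylor error of $\Gamma$ on each cap contributes phase $o(1)$ over $B_R$, and removes the affine parts of the Taylor polynomials by modulations and translations in $(x,t)$, which move $\omega$ but preserve its one-dimensionality and the quantity $\sup_T\omega(T)$.

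\emph{Normal form.} Apply a wave-packet decomposition, say via the refined decoupling theorem \cite[Thm 4.2]{GIOW} used for \cite{CIW}, writing $E_\Gamma g=\sum_T c_T\psi_T$ with $T$ ranging over $R\times R^{1/2}$ tubes, $\psi_T$ an $L^2$-normalized wave packet with frequency support in the cap dual to $\mathrm{dir}(T)$, and $\sum_T|c_T|^2\lesssim\|g\|_2^2$. Dyadically pigeonhole, at a cost $\le R^\epsilon$, so that $|c_T|\sim\lambda$ for all active $T$, the number of active tubes in each direction is $\sim N_0$, and $\omega\approx\mu\sum_j\mathbf 1_{Q_j}$ for unit squares $Q_j$ and a constant $\mu$. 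One is then left to bound $\lambda^2\int\bigl|\sum_T\psi_T\bigr|^2\omega$, with $\|g\|_2^2\sim\lambda^2\,\#\{T\}$.

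\emph{Transfer, and the main obstacle.} The building blocks of the $G$-function are the plane waves $e(\xi_nx+\xi_n^2t)$, which do not decay on $B_R$ and carry $L^2$-mass $\sim R^2$ over $B_R$ per unit $\ell^2$-mass, whereas the wave packets $\psi_T$ each carry $L^2$-mass $\sim1$ and have amplitude $\sim R^{-3/4}$; so the main theorem cannot be applied to $\sum_T\psi_T$ as a black box, and matching the two objects is the heart of the problem. The cleanest route I see: reduce further to $N_0=1$, so that --- after comparing each $\psi_{T_n}$ on its own tube with the corresponding plane wave of amplitude $\sim\lambda R^{-3/4}$ --- $\sum_T\psi_T$ is modelled on the union of the $\#S$ tubes by a $G$-function with $\#S$ coefficients of size $\sim\lambda R^{-3/4}$, and then apply the main theorem; since $R\cdot(\lambda R^{-3/4})^2\,\#S\sim R^{-1/2}\|g\|_2^2$, the bound obtained is $\sup_T\omega(T)^{1/2}\,R^{-1/2}\|g\|_2^2$, comfortably inside the conjecture. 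Two things must be justified here: the reduction to $N_0=1$ --- ruling out that dispersing the $N_0$ wave packets of a common direction helps against a general one-dimensional weight --- and the superlevel-set comparison between $\sum_T\psi_T$ and the model $G$, which is not a literal pointwise inequality, since at a given point only the tubes through it contribute to the former while all active caps contribute to the latter. A more robust alternative is to rerun this paper's method directly for general $g$: expand $\int|E_\Gamma g|^2\omega=\iint g(\xi)\overline{g(\eta)}\,\widehat\omega\bigl(\xi-\eta,\,\Gamma(\xi)-\Gamma(\eta)\bigr)\,d\xi\,d\eta$, perform a major-/minor-arc dissection (the circle-method input), and analyze the superlevel sets of $E_\Gamma g$. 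The obstruction there is that in the $G$-case the circle-method step exploits exact periodicity and Gauss-sum evaluations of $\sum_n e(nx/R^{1/2}+n^2t/R)$; for general $g$ one must substitute a continuum stationary-phase analysis of $\int e(\xi y+\xi^2s)\,d\xi$ on major arcs and an oscillatory or $L^2$ bound on minor arcs, forfeiting the periodicity that organizes the superlevel sets into translates of a fixed cell. Carrying out that substitution --- equivalently, proving the superlevel-set distribution statement of this paper in the non-arithmetic setting --- is, I expect, the real difficulty; the two preceding reductions are routine.
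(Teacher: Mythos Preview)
The statement you are trying to prove is \emph{false}: the paper explicitly disproves Conjecture~\ref{MT general} for general $C^2$ convex curves via Lemma~\ref{counterex}, using the Fu--Ren--Wang curve of Theorem~\ref{FRW}. So your proposal cannot be correct, and the task is to locate the broken step.

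The failure is your very first move, the ``reduction to the parabola.'' You claim that by subdividing $[0,1]$ into caps on which the quadratic Taylor error of $\Gamma$ is $o(1)$ over $B_R$, and then affinely rescaling each cap, one reduces to $\Gamma(\xi)=\xi^2$. With only $C^2$ regularity the third-order error on a cap of width $\delta$ is merely $o(\delta^2)$, not $O(\delta^3)$, so to make the phase error $o(1)$ over $B_R$ you must take $\delta=o(R^{-1/2})$ --- strictly finer than the decoupling scale. After rescaling such a cap the new scale $R'$ satisfies $R'=o(R)$, and you have $\gg R^{1/2}$ caps to recombine in a weighted $L^2$ inequality that has no square-function or $\ell^2$-decoupling structure to absorb the sum. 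The paper's counterexample makes this concrete: the Fu--Ren--Wang curve carries $\gtrapprox R^{1/3}$ frequencies $(\frac{n}{N},\Gamma(\frac{n}{N}))$ lying on the lattice $\frac{1}{N}\mathbb Z^2$, producing constructive interference of size $\approx R^{1/3}$ on an $R^{1/2}$-separated lattice of unit squares in $B_R$; this arithmetic coincidence is invisible to any cap-by-cap parabolic approximation and yields the $R^{1/12}$ loss in Lemma~\ref{counterex}.

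A secondary issue: even restricting to the genuine parabola (Conjecture~\ref{MT 1d}), the paper does \emph{not} prove the estimate for general $g$, only for quadratic exponential sums (Theorem~\ref{main thm}). Your ``transfer'' step --- reducing general $g$ to a $G$-function via $N_0=1$ pigeonholing, or rerunning the circle method for continuous $g$ --- is exactly the gap the paper leaves open; you correctly flag the loss of periodicity and Gauss-sum structure as the obstruction, but you have not overcome it, and neither has the paper.
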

However, as we will see from Lemma \ref{counterex}, this conjecture is false for general $C^2$ convex curves. Therefore, we turn to the special case when $\Gamma(\xi)=\xi^2$ is the quadratic function, and the curve is the truncated parabola. Define $E$ to be the extension operator for the truncated parabola, it is conjectured that 
\begin{conjecture}  \label{MT 1d}
Let $\omega: B_R \to [0,\infty)$ be a one-dimensional weight, then it holds that 
\begin{equation}  \label{key inequa}
\int_{B_R} |E g|^2 \omega \lesssim_\epsilon R^\epsilon \sup_{T}\omega(T)^{\frac{1}{2}} \|g\|_2^2,    
\end{equation}
where $T$ ranges over $R\times R^{\frac{1}{2}}$ tubes.     
\end{conjecture}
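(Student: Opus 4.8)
The plan is to run the program announced in the abstract: a $TT^{*}$/duality set-up, a dyadic decomposition of the superlevel sets of $G$, and the circle method to describe those sets. First I would normalise $\|a_n\|_{l^2}=1$ and record the elementary inputs $\|G\|_{L^{\infty}}\le\|a_n\|_{l^1}\le R^{1/4}$, $\int_{B_R}|G|^2=R^2$, and --- tiling $B_R$ by $R^{1/2}$ parallel $R\times R^{1/2}$ tubes --- $\sup_T\omega(T)\ge R^{-1/2}\omega(B_R)$, hence $\omega(B_R)\le R^{3/4}\sup_T\omega(T)^{1/2}$ (using also $\omega(B_R)\le R$). A dyadic pigeonhole then reduces the statement to showing, for each dyadic $\lambda\in[R^{-10},R^{1/4}]$,
\[
\lambda^2\,\omega(U_\lambda)\ \lesssim_{\epsilon}\ R^{\epsilon}\,\sup_T\omega(T)^{1/2}\,R ,\qquad U_\lambda:=\{(x,t)\in B_R:\ |G(x,t)|\sim\lambda\};
\]
for $\lambda\le 1$ this is immediate from $\omega(U_\lambda)\le\omega(B_R)\le R^{3/4}\sup_T\omega(T)^{1/2}$ and $\sum_{\lambda\le1}\lambda^2=O(1)$, so the content is the range $1<\lambda\le R^{1/4}$.

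The engine is the two-sided bound $\omega(U_\lambda)\le\min\bigl(\omega(B_R),\ \kappa(U_\lambda)\sup_T\omega(T)\bigr)$, where $\kappa(U_\lambda)$ is the least number of $R\times R^{1/2}$ tubes covering $U_\lambda$, together with a tube count of the shape
\[
\kappa(U_\lambda)\ \lesssim_{\epsilon}\ R^{\epsilon}\,\frac{R\,\|a_n\|_{l^2}^{4}}{\lambda^{4}}\qquad(1<\lambda\le R^{1/4}).
\]
Granting this, split at $\lambda=\sup_T\omega(T)^{1/4}$: below this value use $\omega(U_\lambda)\le\omega(B_R)\le R$ and $\lambda^2 R\le\sup_T\omega(T)^{1/2}R$; above it use $\lambda^2\kappa(U_\lambda)\sup_T\omega(T)\lesssim_{\epsilon}R^{\epsilon}R\sup_T\omega(T)/\lambda^2\le R^{\epsilon}R\sup_T\omega(T)^{1/2}$, and sum over the $O(\log R)$ scales. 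The tube count itself is meant to come from two ingredients. The first is the measure bound $|U_\lambda|\lesssim_{\epsilon}R^{\epsilon}R^{2}\|a_n\|_{l^2}^{6}/\lambda^{6}$ --- equivalently $R^{2}\|a_n\|_{l^2}^{4}/\lambda^{4}$ from the $L^4$ orthogonality of the parabola --- supplied by $\ell^{2}$-decoupling for the parabola at the critical scale $B_R=B_{(R^{1/2})^2}$ applied to $G$; this is uniform in $a_n$ but only controls volume. The second is the circle method together with the $TT^{*}$ identity: writing $\int|G|^2\omega\le\|a_n\|_{l^2}^2\|\mathcal S_\omega\|_{L^2(\omega)\to L^2(\omega)}$ with kernel $D(u,v)=\sum_{n\le R^{1/2}}e(\tfrac{n}{R^{1/2}}u+\tfrac{n^{2}}{R}v)$ --- a quadratic Weyl sum in $n$ --- Weyl's inequality and Gauss-sum evaluation show that $|D|\gtrsim\mu$ forces $t/R$ to lie within $O(1/(qR^{1/2}))$ of a rational $b/q$ with $q\lesssim_{\epsilon}R^{\epsilon}(R^{1/2}/\mu)^2$ and $x/R^{1/2}$ near $m/q$, so the large values cluster near the rational lattice $\{(R^{1/2}m/q,\,Rb/q)\}$, organised into a bounded number of ``bushes'' of tubes; combining the volume bound with this bush geometry (the number of tubes through a common point being a representation count for the underlying quadratic form) is what should yield the displayed estimate for $\kappa(U_\lambda)$.

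The main obstacle is precisely this combination in the intermediate, minor-arc regime $1<\lambda\lesssim R^{1/4-\delta}$. A pure Schur test on $|D|$ is hopeless there: $|D|\sim R^{1/4}$ on a constant proportion of $B_R$, and replacing $D$ by $|D|$ throws away the Weyl-sum cancellation by a polynomial factor. Equally, the decoupling volume bound alone does not control $\kappa(U_\lambda)$, since $U_\lambda$ need not be tube-efficient --- a priori it can be a scattering of unit-scale boxes, one per tube --- so one genuinely needs the arithmetic to rule out such spreads, and one must also allow exotic coefficient sequences: modulations $a_n=e(-\beta_0 n-\gamma_0 n^2)$ merely translate the focusing lattice, but a general $a_n$ superposes many such pictures, so the bush count must be obtained with constants uniform in $a_n$. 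I expect the resolution to be a refined-decoupling iteration in the spirit of \cite{GIOW},\cite{CIW}, run on the superlevel set $U_\lambda$ and localised over the bush structure, with the circle method feeding in the incidence (representation-number) bounds; reconciling the uniform volume bound, the arithmetic bush geometry, and the weight hypothesis $\omega(B_R)\le R$ is the technical heart of the argument.
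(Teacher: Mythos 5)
Your outer reduction is essentially the paper's: after pigeonholing the weight into pieces where $|G|\sim\lambda\|a_n\|_{l^2}$, disposing of small $\lambda$ by the trivial averaging bound $\sup_T\omega(T)\ge R^{-1/2}\omega(B_R)$, and using a count of $\lessapprox R\lambda^{-4}$ horizontal tubes meeting the superlevel set, one gets the weighted estimate exactly as in the paper's proof of Theorem \ref{main thm} from Corollary \ref{coro reform}. (Note also that the statement you are addressing is a conjecture for general $g\in L^2$; both you and the paper only treat the exponential-sum case, i.e.\ Theorem \ref{main thm}.) But the engine of your argument, the tube count $\kappa(U_\lambda)\lesssim_\epsilon R^{1+\epsilon}\lambda^{-4}$, is precisely the paper's Corollary \ref{dist of lvl sets}, and you do not prove it: you state yourself that the intermediate regime $1<\lambda\lesssim R^{1/4-\delta}$ is ``the technical heart'' and only gesture at a refined-decoupling iteration plus ``bush geometry.'' This is a genuine gap, and it is exactly where the paper's real content lies. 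Decoupling (or global $L^4$ orthogonality) gives only a volume bound for $U_\lambda$, which, as you observe, is consistent with one unit box per horizontal strip and hence loses a full factor of $N=R^{1/2}$ in the strip count; what is actually needed is the $L^4$ estimate of Theorem \ref{L4 thm} restricted to a union of one $\frac1N\times\frac1{N^2}$ box per strip, $\|f\|_{L^4(E)}\lessapprox N^{-1/4}\|a_n\|_{l^2}$, which is strictly stronger than the global bound and is not a consequence of it.

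The paper obtains that sparse $L^4$ bound by a route your sketch does not reach: a duality/$TT^*$ step reduces the weighted $L^2$ estimate on $E_{W_M}$ to a bilinear form with the coefficient-free kernel $K(x,t)=\sum_{n\le N}e(nx+n^2t)$ --- this is what makes the estimate uniform in $a_n$, the issue you flag with ``exotic'' modulated sequences; then the circle method splits $K$ into major-arc pieces $K_Q$ with $\|K_Q\|_{L^\infty}\lesssim N/Q^{1/2}$ and a minor-arc remainder with $\|K'\|_{L^\infty}\lessapprox N^{1/2}$; and the decisive input is an incidence bound (Lemma \ref{inci lemma}, proved with Ramanujan sums) showing that at most $\lessapprox QM$ pairs of the $M$ chosen strips have $t$-separation within $O(1/(QN))$ of a reduced fraction with denominator $\sim Q$, which exactly cancels the $N^2/Q$ loss from $|K_Q|^2$. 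Your proposed ``Schur test is hopeless / bush count of tubes through a point'' discussion does not substitute for this: the paper never needs tube-efficiency of $U_\lambda$ or counts of tubes through a point, because it works only with the horizontal tubes that tile $B_R$ (justified by the $x$-periodicity of $G$ and the sharpness example in the introduction), and the arithmetic enters solely through the pairwise $t$-separation incidences above. Without a proof of the strip count, your argument establishes nothing beyond the easy regime $\lambda\lesssim R^{1/8}$ already handled by averaging.
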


This conjecture is sharp up to an $R^\epsilon$ factor due to the Knapp example and the quadratic Weyl sum example. Consider 
\[
Eg=\sum_{n=1}^{R^{\frac{1}{2}}} e\big( \frac{n}{R^{\frac{1}{2}}}x+\frac{n^2}{R} t \big)
\]
in $B_R$, and define $U$ to be the union of unit boxes centered at 
\[
\big\{(x,t)\in B_R: \frac{x}{R^{\frac{1}{2}}}\equiv \frac{k}{q} (\text{mod }1), \frac{t}{R}=\frac{a}{q}, q\sim R^{\frac{1}{6}}, 0\le a, k<q, (a,q)=1 \big\}.
\]
We see that $U$ is a one-dimensional set in $B_R$, and $|Eg|\sim R^{\frac{5}{12}}$ on an $R^{-\epsilon}$ proportion of $U$ for any small $\epsilon>0$. Thus for \eqref{key inequa} to be true with $\omega=1_U$, the tube $T$ we choose has to be horizontal. However, the horizontal tubes do not show up in the wave packet decomposition of $Eg$. As a matter of fact, we will see that in the case when $Eg$ is an exponential sum, it suffices to consider horizontal tubes in \eqref{key inequa}. 

Our result is to prove the special case of Conjecture \ref{MT 1d} when $G=Eg$ is an quadratic exponential sum, namely, 
\begin{equation}  \label{def G}
G(x,t)=\sum_{n=1}^{R^{\frac{1}{2}}} a_n e\big(\frac{n}{R^{\frac{1}{2}}} x+\frac{n^2}{R} t\big).    
\end{equation}
in $B_R$. In this case, 
\[
\|g\|_2^2\sim R^{-1} \|G\|_{L^2(B_R)}^2\sim R \, \|a_n\|_{l^2}^2. 
\]
Then we can obtain the following estimate. 
\begin{theorem}  \label{main thm}
Suppose that $R^{\frac{1}{2}}$ is an integer, and let $\omega$ be a nonnegative weight in $B_R$ such that $\omega(B_R)\le R$. For any complex sequence $\{a_n\}_{n=1}^{R^{\frac{1}{2}}}$, define the exponential sum $G$ as above \eqref{def G}. Then it holds that 
\[
\int_{B_R} |G|^2 \omega \lessapprox \sup_{T: R\times R^{\frac{1}{2}} \text{tube}} \omega(T)^{\frac{1}{2}} \cdot R \, \|a_n\|_{l^2}^2.  
\]
\end{theorem}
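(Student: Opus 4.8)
The plan is to control $\int_{B_R}|G|^2\omega$ through the superlevel sets $U_\lambda=\{(x,t)\in B_R:|G(x,t)|>\lambda\}$, playing two a priori facts about $\omega$ against each other: the global bound $\omega(B_R)\le R$, and the tube bound $\omega(T)\le\sup_{T'}\omega(T')$. First I would use homogeneity to normalise $\|a_n\|_{l^2}=1$, and dispose of the case $M:=\sup_{T}\omega(T)^{1/2}\le1$ directly: there $\int|G|^2\omega\le\|G\|_{L^\infty}^2\,\omega(B_R)\lesssim R^{1/2}\cdot R^{1/2}M^2\le RM$, using $\|G\|_{L^\infty}\le\|a_n\|_{l^1}\le R^{1/4}$ and $\omega(B_R)\le R^{1/2}M^2$ (cover $B_R$ by $R^{1/2}$ horizontal tubes). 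So assume $M\ge1$. Since $\int_{B_R}|G|^2\omega\approx\sum_{\lambda}\lambda^2\omega(U_\lambda)$ with $\lambda$ over the $O(\log R)$ dyadic values in $[1,R^{1/4}]$ — the part $|G|\le1$ contributes $\le\omega(B_R)\le R\le RM$ — it is enough to prove $\lambda^2\omega(U_\lambda)\lesssim RM$ for each such $\lambda$.

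For the dyadic piece I would use the elementary inequality
\[
\omega(U_\lambda)\ \le\ \min\big(\omega(B_R),\ N_\lambda\,M^2\big)\ \le\ \min\big(R,\ N_\lambda\,M^2\big),
\]
where $N_\lambda$ is the number of \emph{horizontal} $R\times R^{1/2}$ tubes $T_j=[0,R]\times[jR^{1/2},(j+1)R^{1/2}]$ on which $\sup_{T_j}|G|>\lambda$: $U_\lambda$ is covered by those tubes and each has $\omega$-mass $\le M^2$. The crucial estimate, which I discuss below, is
\[
N_\lambda\ \lessapprox\ \frac{R}{\lambda^4}\qquad(\star)
\]
(with $\|a_n\|_{l^2}=1$; in general the right-hand side is $R\,\|a_n\|_{l^2}^4/\lambda^4$). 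Granting $(\star)$, the arithmetic–geometric mean gives
\[
\lambda^2\omega(U_\lambda)\ \lesssim\ \lambda^2\min\Big(R,\ \frac{R M^2}{\lambda^4}\Big)\ \le\ \lambda^2\Big(R\cdot\frac{R M^2}{\lambda^4}\Big)^{1/2}\ =\ RM,
\]
and summing over the $O(\log R)$ dyadic values of $\lambda$ completes the proof — in fact in the sharper form in which $T$ ranges only over horizontal tubes, matching the remark in the introduction. (When $\lambda\le R^{1/8}$ one may instead use the trivial count $N_\lambda\le R^{1/2}$; the same computation goes through because then $\lambda^2\le R^{1/4}$.)

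The heart of the matter is thus $(\star)$, which is equivalent to the $L^4$ bound for the maximal function of $|G|$ along horizontal tubes,
\[
\sum_{j}\Big(\sup_{T_j}|G|\Big)^{4}\ \lessapprox\ R\,\|a_n\|_{l^2}^{4},
\]
and this is where the $TT^*$ method and the circle method come in. For fixed $t$ one has $\sup_x|G(x,t)|^{2}\le\sum_h|\widehat{G^2}_h(t)|$, where $\widehat{G^2}_h(t)=\sum_m a_{m+h}\overline{a_m}\,e\big(\tfrac{(m+h)^2-m^2}{R}\,t\big)$ are the Fourier coefficients (period $R^{1/2}$) of $|G(x,t)|^2$ in $x$ — the $TT^*$ kernel. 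One then has to show that $\sum_h|\widehat{G^2}_h(t)|$ is large only when $t/R$ is close to a rational $a/q$ with small $q$, with size governed by that $q$ just as in Weyl's inequality. I would run a Dirichlet/Farey dissection of the $t$-variable: on the major arc around $a/q$ the sums $\widehat{G^2}_h$ are analysed by completing the inner sum over residues modulo $q$ (producing Gauss-type sums and, for general $a_n$, the $l^2$-mass of the coefficients along residue classes), and on the minor arcs one gets power savings; then one sums over $h$ with divisor estimates and the large sieve, and over the tubes using the count $\#\{(a,q):q\le Q\}\asymp Q^2$ together with $\sum_{Q\ \mathrm{dyadic}\le R^{1/2}}1\lesssim\log R$. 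For $a_n\equiv1$ this is the classical Weyl-sum computation and yields $(\star)$ with a single logarithmic loss.

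The main obstacle is carrying this out for \emph{arbitrary} coefficients $a_n$, where Weyl's inequality is not available in the usual form: one cannot complete $\widehat{G^2}_h(t)=\sum_m a_{m+h}\overline{a_m}\,e(2hmt/R)$ to a full sum, so there is no clean Gauss-sum factorisation, and the naive Nikolskii/Cauchy–Schwarz substitute loses a factor of the denominator $q$. The right tool should be a large-sieve–type inequality for the correlation sums $W_h(\theta)=\sum_m a_{m+h}\overline{a_m}\,e(\theta m)$ sampled at the arithmetic-progression points $\theta=2h\,(t/R)$, to be combined with the diagonal bound $\sum_h\big(\sum_m|a_{m+h}|^2|a_m|^2\big)^{1/2}\le R^{1/4}\|a_n\|_{l^2}^2$; the major-arc contributions then have to be estimated uniformly in $a_n$, and the overlaps of major arcs belonging to different denominators $q$ carefully bookkept so that the final sum over the $R^{1/2}$ tubes does not blow up. This uniform-in-$a_n$ incarnation of the circle-method estimate, rather than any one clever inequality, is the technical core.
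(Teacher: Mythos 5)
Your reduction is essentially the paper's own proof of Theorem \ref{main thm}: decompose by the dyadic size of $|G|$, cover each level set by the $R^{1/2}$ horizontal tubes $T_j$, and interpolate (a geometric mean, i.e. writing $\omega(U_\lambda)^{1/2+1/2}$) between $\omega(B_R)\le R$ and the tube bound $\sup_T\omega(T)$, with the trivial count $N_\lambda\le R^{1/2}$ disposing of the range $\lambda\le R^{1/8}$. That part is correct and matches the paper, which runs the same two-case argument after decomposing $\omega=\sum_\mu\omega_\mu$.

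The genuine gap is $(\star)$, i.e. $N_\lambda\lessapprox R\lambda^{-4}$, which carries all of the arithmetic content of the theorem: it is precisely the paper's Corollary \ref{dist of lvl sets} (equivalently, via the locally constant property, the $L^4$ estimate of Theorem \ref{L4 thm} over one $\frac{1}{N}\times\frac{1}{N^2}$ box per horizontal strip), and you do not prove it --- your sketch stalls exactly where you say it does, at the absence of Weyl/Gauss-sum structure for arbitrary coefficients $a_n$. The paper's resolution is different from the large-sieve route you propose and sidesteps the uniformity-in-$a_n$ issue entirely: after pigeonholing onto a union $E_M$ of $M$ boxes, the weighted $L^2$ estimate is dualized twice, so the coefficients $a_n$ disappear and one is left with the coefficient-free $TT^*$ kernel $K(x,t)=\sum_{n\le N}e(nx+n^2t)$ and the bilinear form $\langle K\ast h1_{E_M},h1_{E_M}\rangle$. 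A Farey dissection in the $t$-variable is then applied to $K$ alone: off the major arcs $|K|\lessapprox N^{1/2}$ suffices, on the major arcs one uses Bourgain's bound $\|K_Q\|_{L^\infty}\lesssim NQ^{-1/2}$, and the major-arc contribution is controlled by an incidence lemma (proved with Ramanujan sums) counting pairs of boxes whose $t$-coordinates differ from a reduced fraction of denominator $\sim Q$ by $O(1/(QN))$; this yields $\lesssim_\epsilon N^\epsilon QM$ incidences and hence the needed bound $M^{1/2}N^{-2}\|h\|_{L^2}^2$. So to complete your argument you would either have to supply this duality step --- which is the paper's key device for handling general $a_n$ --- or genuinely carry out the uniform-in-$a_n$ circle-method/large-sieve analysis you outline, which your own write-up acknowledges is unresolved.
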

The proof of Theorem \ref{main thm} relies on the distributions of superlevel sets of the function $G$. 
\begin{theorem}    \label{L4 thm}
Let $N$ be a large integer and $\{a_n\}$ be a complex sequence. Consider the quadratic sum 
\[
f(x,t)=\sum_{n=1}^N a_n e(nx+n^2 t) 
\]
defined on the torus $\mathbb T^2$. In each horizontal strip 
\[
S_j=[0,1]\times \big[\frac{j-1}{N},\frac{j}{N}\big] \subset [0,1]^2, \quad j\in [1,N]\cap \mathbb Z,
\]
we pick a $\frac{1}{N}\times \frac{1}{N^2}$ box $B_j$ (this is the largest box preserving the locally constant property of a quadratic sum), and define $E=\bigcup_{j=1}^N B_j$ to be the union of these boxes. Then we have
\begin{equation}  \label{l4}
\Big\|\sum_{n=1}^N a_n e(nx+n^2 t)\Big\|_{L^4(E)} \lessapprox N^{-\frac{1}{4}} \|a_n\|_{l^2}.
\end{equation}   
\end{theorem}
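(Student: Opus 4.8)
The plan is to combine the locally constant property of $f$ with a $TT^{*}$ expansion and a completion‑of‑squares step in the $t$‑variable, thereby reducing \eqref{l4} to a purely arithmetic count that is then settled by the divisor bound for sums of two squares.

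\textbf{Reduction and $TT^{*}$ expansion.} The box $B_j$ has dimensions $\tfrac1N\times\tfrac1{N^2}$, which is dual to the frequency box $[0,N]\times[0,N^{2}]$ carrying the arc $\{(n,n^{2}):1\le n\le N\}$; hence $f$ obeys the locally constant property on each $B_j$: there is a weight $w_{B_j}\ge 0$, $\gtrsim 1$ on $B_j$ and rapidly decaying off it, with $\sup_{B_j}|f|^{2}\lesssim|B_j|^{-1}\int|f|^{2}w_{B_j}$. Writing $\mu_j:=\int|f|^{2}w_{B_j}$ we get $\int_{B_j}|f|^{4}\le|B_j|\sup_{B_j}|f|^{4}\lesssim|B_j|^{-1}\mu_j^{2}=N^{3}\mu_j^{2}$, so it suffices to prove $\sum_j\mu_j^{2}\lessapprox N^{-4}\|a_n\|_{l^{2}}^{4}$. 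Since $\mu_j$ is real, expanding $|f|^{2}$ on the Fourier side gives $\mu_j=\sum_{n,m}a_n\bar a_m\,\widehat{w_{B_j}}(m-n,m^{2}-n^{2})$ with $\widehat{w_{B_j}}(\xi,\tau)=|B_j|\,e(-\xi x_j-\tau t_j)\,\widehat{\phi_0}(\xi/N,\tau/N^{2})$, where $(x_j,t_j)$ is the centre of $B_j$ and $\phi_0$ a fixed rapidly decaying profile with $\widehat{\phi_0}(0,0)=1$. Squaring and summing over $j$, and abbreviating $s:=m_1+m_2-n_1-n_2$ and $\sigma:=m_1^{2}+m_2^{2}-n_1^{2}-n_2^{2}$, this becomes
\[
\sum_j\mu_j^{2}\;=\;N^{-6}\sum_{\mathbf n,\mathbf m\in[1,N]^{2}}a_{n_1}a_{n_2}\bar a_{m_1}\bar a_{m_2}\,\Phi(\mathbf n,\mathbf m)\sum_{j=1}^{N}e\big(-s\,x_j-\sigma\, t_j\big),
\]
where $s,\sigma$ are integers with $|s|\le 2N$, $|\sigma|\le 2N^{2}$, and $|\Phi|\lesssim 1$ comes from the profiles.

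\textbf{Completion in $t$ and the arithmetic count.} Because $t_j=j/N$, for a common $x$‑centre $x_j\equiv x_*$ one has $\sum_j e(-s x_j-\sigma t_j)=e(-s x_*)\,N\,\mathbf 1_{N\mid\sigma}$, so the sum collapses to $N^{-5}$ times a congruence‑restricted bilinear form; for genuinely varying centres one pigeonholes the $x_j$ at scale $N^{-1}$, treats $(e(-s x_j))_j$ as a unimodular sequence, and uses $\sum_{\sigma\bmod N}\big|\sum_j e(-s x_j)e(-\sigma j/N)\big|^{2}=N^{2}$ to reduce to the same count, namely
\[
\sum_{\substack{\mathbf n,\mathbf m\in[1,N]^{2}\\ N\mid (m_1^{2}+m_2^{2}-n_1^{2}-n_2^{2})}}\big|a_{n_1}a_{n_2}a_{m_1}a_{m_2}\big|\;\lessapprox\;N\,\|a_n\|_{l^{2}}^{4}.
\]
Since $m_i,n_i\le N$, the congruence forces $m_1^{2}+m_2^{2}-n_1^{2}-n_2^{2}=N\ell$ with $|\ell|\le 4N$. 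Fixing $\ell$, putting $c=N\ell$ and $R(m)=\sum_{a^{2}+b^{2}=m,\;a,b\le N}|a_a a_b|$, the $\ell$‑summand equals $\sum_m R(m)R(m-c)\le\sum_m R(m)^{2}$ by Cauchy–Schwarz, and $\sum_m R(m)^{2}\le\big(\max_{m\le 2N^{2}}r_2(m)\big)\sum_{a,b}|a_a a_b|^{2}\lessapprox\|a_n\|_{l^{2}}^{4}$ since $r_2(m)\ll_{\epsilon}m^{\epsilon}$. Summing the $O(N)$ values of $\ell$ yields the display, and unwinding the previous paragraph gives \eqref{l4}. Note the crux is that although there are $\gtrsim N$ residue shifts $\ell$, each contributes only $\|a_n\|_{l^{2}}^{4}$ rather than the $N\|a_n\|_{l^{2}}^{4}$ a trivial count would give — this is exactly the extra factor $N$ over the global $L^{4}$ orthogonality on $\mathbb T^{2}$.

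\textbf{The main obstacle.} The easy case is a common $x$‑centre, where the completion in $t$ is exact and the argument is essentially the two‑squares divisor bound. The real work is in genuinely varying centres $x_j$: one must keep $(x_j)_j$ an arbitrary sequence and control the two‑parameter exponential sum $\sum_j e(-s x_j-\sigma t_j)$ while simultaneously respecting the arithmetic of $s$ and of $\sigma$. The Parseval reduction sketched above is the right mechanism, but it must be interleaved with the $\ell$‑decomposition carefully enough not to squander the decisive factor $N$; everything else — the local‑constancy reduction, the $TT^{*}$ bookkeeping, and the sums‑of‑two‑squares estimate — is routine.
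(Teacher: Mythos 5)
There is a genuine gap, and it sits exactly where the theorem's difficulty lies. Your completion-in-$t$ step assumes $t_j=j/N$, so that $\sum_{j=1}^N e(-\sigma t_j)=N\,\mathbf 1_{N\mid\sigma}$ and the whole problem collapses to a congruence mod $N$ settled by the two-squares divisor bound. But in the statement the box $B_j$ is an \emph{arbitrary} $\frac1N\times\frac1{N^2}$ box inside the strip $S_j$: its $t$-centre is an arbitrary point of $[\frac{j-1}{N},\frac{j}{N}]$ and its $x$-centre is arbitrary in $[0,1]$. For such $t_j$ there is no exact orthogonality in $j$, and the resonant configurations are not ``$\sigma\equiv 0 \pmod N$'' but rather quadruples for which $t_i-t_j$ is close to a rational $a/q$ with \emph{any} denominator $q\le N$, at the scale $1/(qN)$. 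Your fix for ``genuinely varying centres'' — pigeonholing $x_j$ and invoking $\sum_{\sigma\bmod N}\big|\sum_j e(-sx_j)e(-\sigma j/N)\big|^2=N^2$ — cannot be executed as stated: it still presupposes $t_j=j/N$, and in the quadrilinear form $\sigma$ is determined by $(\mathbf n,\mathbf m)$, so you are not free to sum over $\sigma$ modulo $N$; any Cauchy--Schwarz inserted to decouple it threatens exactly the factor of $N$ you yourself identify as decisive. Your closing paragraph essentially concedes that this main case is unproved, so the argument as written only covers a very special (arithmetic-progression) placement of the boxes, not the theorem.

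For comparison, the paper handles the arbitrary placement by a different route: it first pigeonholes to a sublevel set supported on $M$ boxes and reduces, via two dualizations ($TT^*$), to bounding $\langle K\ast h1_{E_M},h1_{E_M}\rangle$ for the coefficient-free kernel $K(x,t)=\sum_{n\le N}e(nx+n^2t)$; it then runs the circle method on $K$, using the Weyl bound $|K|\lessapprox N^{1/2}$ off the major arcs and $\|K_Q\|_{L^\infty}\lesssim NQ^{-1/2}$ on arcs with denominator $q\sim Q$, and finally controls the number of box pairs with $t_i-t_j=\frac aq+O(\frac1{QN})$, $q\sim Q$, by $\lessapprox QM$ via a Ramanujan-sum argument (Lemma \ref{inci lemma}). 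That incidence lemma over all dyadic $Q$ is the correct replacement for your single congruence mod $N$; if you want to salvage your $\sum_j\mu_j^2$ expansion, you would need an analogue of it, at which point you have essentially reconstructed the paper's proof. Note also that the paper's intermediate local $L^2$ estimate over arbitrary $M$-element families of boxes is strictly more information than \eqref{l4} alone and is what feeds Corollary \ref{dist of lvl sets}.
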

This $L^4$ estimate \eqref{l4} is sharp by considering $a_n=1$ for all $n$ and $|f|\sim N$ in the $\frac{1}{N}\times \frac{1}{N^2}$ box centered at origin. Moreover, it has the following corollary regarding the distribution of the superlevel sets of $f$.  
\begin{corollary}  \label{dist of lvl sets}
Following the definitions in Theorem \ref{L4 thm}, let $\lambda \in [N^{\frac{1}{4}},N^{\frac{1}{2}}]$ be a dyadic number. Define $\#_{\lambda}$ to be the number of horizontal strips $S_j$ such that there is a point $(x_j,t_j)$ in $S_j$ with $|f(x_j,t_j)|\sim \lambda  \|a_n\|_{l^2}$, then 
\[
\#_\lambda\lessapprox N^2 \lambda^{-4}. 
\]
\end{corollary}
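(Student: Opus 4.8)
The plan is to deduce Corollary~\ref{dist of lvl sets} from the $L^4$ bound \eqref{l4} by a Chebyshev-type argument, using crucially that Theorem~\ref{L4 thm} lets us choose the box $B_j$ freely inside each strip $S_j$.

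First I would record the locally constant property of $f$. Since $\widehat f$ is supported on $\{(n,n^2):1\le n\le N\}$, a set contained in a rectangle of dimensions $\sim N\times N^2$, we may write $f=f*\psi$ for an $L^1$-normalized Schwartz bump $\psi$ whose Fourier transform equals $1$ on that rectangle; such a $\psi$ is concentrated, with rapidly decaying tails, on the dual box of dimensions $\tfrac1N\times\tfrac1{N^2}$. Writing $z_0=(x_0,t_0)$, H\"older's inequality then gives
\[
|f(z_0)|^4=|(f*\psi)(z_0)|^4\le\Big(\int|f(z)|\,|\psi(z_0-z)|\,dz\Big)^4\lesssim \frac{1}{|B|}\int_{B}|f|^4,
\]
where $B$ is the box of dimensions $\sim\tfrac1N\times\tfrac1{N^2}$ centered at $z_0$ and we used $\|\psi\|_{L^1}\sim1$ together with the rapid decay of $\psi$. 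Consequently, if $(x_j,t_j)\in S_j$ satisfies $|f(x_j,t_j)|\sim\lambda\|a_n\|_{l^2}$, then choosing $B_j\subseteq S_j$ to be a $\tfrac1N\times\tfrac1{N^2}$ box that contains $(x_j,t_j)$ (or, if $(x_j,t_j)$ lies within $\tfrac1{N^2}$ of the top or bottom edge of $S_j$, a box containing a nearby interior point where $|f|$ is comparable), we obtain
\[
\int_{B_j}|f|^4\gtrsim|B_j|\,\lambda^4\|a_n\|_{l^2}^4\sim N^{-3}\lambda^4\|a_n\|_{l^2}^4.
\]

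Next I would assemble the set $E$ of Theorem~\ref{L4 thm}: in each of the $\#_\lambda$ strips that contain a point with $|f|\sim\lambda\|a_n\|_{l^2}$ take $B_j$ as above, and in every other strip take an arbitrary $B_j$. Summing the previous display over the heavy strips and discarding the rest,
\[
\|f\|_{L^4(E)}^4=\sum_{j=1}^N\int_{B_j}|f|^4\ \ge\ \sum_{j\ \mathrm{heavy}}\int_{B_j}|f|^4\ \gtrsim\ \#_\lambda\,N^{-3}\lambda^4\|a_n\|_{l^2}^4.
\]
On the other hand \eqref{l4}, raised to the fourth power and applied to this same $E$, gives $\|f\|_{L^4(E)}^4\lessapprox N^{-1}\|a_n\|_{l^2}^4$. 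Comparing the two inequalities and cancelling $\|a_n\|_{l^2}^4$ yields $\#_\lambda\lessapprox N^2\lambda^{-4}$, as claimed. (The hypothesis $\lambda\in[N^{1/4},N^{1/2}]$ is not needed in the argument: for $\lambda>N^{1/2}$ the Cauchy--Schwarz bound $|f|\le N^{1/2}\|a_n\|_{l^2}$ forces $\#_\lambda=0$, and for $\lambda<N^{1/4}$ the asserted bound $N^2\lambda^{-4}$ already exceeds the trivial bound $N$ on the number of strips.)

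I expect the only genuine subtlety to be the second step: turning a single large point value into an honest lower bound for the $L^4$ mass of a full $\tfrac1N\times\tfrac1{N^2}$ box lying \emph{inside} the correct strip. This means being careful with the implicit constant in the locally constant estimate (if one is forced to a box of dimensions $C_0\tfrac1N\times C_0\tfrac1{N^2}$, one covers it by $O(C_0^2)$ admissible boxes spread over $O(1)$ consecutive strips and applies Theorem~\ref{L4 thm} $O(C_0^2)$ times), and handling the boundary case where $(x_j,t_j)$ sits near the edge of $S_j$. Everything else is bookkeeping.
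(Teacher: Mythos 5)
Your argument is correct and is exactly the route the paper intends: it states that the corollary ``follows directly from Theorem \ref{L4 thm} and the fact that $|f|$ is locally constant on each box $B_j$,'' which is precisely your Chebyshev-type argument of placing a $\tfrac1N\times\tfrac1{N^2}$ box at each heavy point, lower-bounding its $L^4$ mass via the locally constant property, and comparing with \eqref{l4}. You also correctly identify and handle the only real technicality (the enlarged box / boundary-of-strip issue, resolved by covering with $O(1)$ admissible boxes and finitely many applications of Theorem \ref{L4 thm}), so nothing further is needed.
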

This level set estimate is sharp up to an $N^\epsilon$ factor for every $\lambda$ by considering the quadratic Weyl sum. It follows directly from Theorem \ref{L4 thm} and the fact that $|f|$ is locally constant on each box $B_j$. We omit the proof. 

\,

\textbf{Notations}: $e(z)=e^{2\pi i z}$ is the complex exponential, here $z\in \mathbb R$. 

$A\lesssim B$ (or $A=O(B)$) means $A \le C B$,  where $C$ is some positive constant, and $A\lesssim_\epsilon B$ (or $A=O_\epsilon(B)$) indicates that the implicit constant may depend on the subscript $\epsilon$. 

$A\lessapprox B$ stands for $A\lesssim_\epsilon R^\epsilon B$. 

$A\sim B$ means that we have both $A\lesssim B$ and $B\lesssim A$. 

To distinguish the Fourier transforms on $\mathbb T^2$ and $\mathbb R^2$, we use $\mathcal{F}$ to denote the Fourier transform on $\mathbb T^2$, and $\hat{}$ to denote the Fourier transform on $\mathbb R^2$. 

\,

{\bf Acknowledgements.} The author would like to thank Shukun Wu and Alex Ortiz for introducing problem \ref{MT general} to him. Shukun Wu also pointed out the idea that leads to Lemma \ref{counterex}. The author is also grateful to Xiaochun Li and Zane Li for some helpful discussions. The author is supported by a UIUC department fellowship.

\section{The locally constant property}

The locally constant property is a fact that we will use throughout this paper. In this section, we show its validity. 

For the exponential sum 
\[
f(x,t)=\sum_{n=1}^N a_n e(nx+n^2 t) 
\]
defined on the torus $\mathbb T^2$, the Fourier transform of $f$, denoted by $\mathcal{F}f$, is supported on $\mathbb Z^2\cap \Big( [0,N]\times [0,N^2]\Big)$. 

There is a smooth bump function $\eta: \mathbb R^2 \to \mathbb R^2$ such that $|\eta|\le 1$, $\eta=1$ on $[-2N,2N]\times [-2N^2,2N^2]$, and $\hat{\eta}$ is compactly supported on $[-10N^{-1},10N^{-1}]\times [-10 N^{-2},10N^{-2}]$. $\eta$ can be viewed as a function on $\mathbb Z^2$ and $\hat{\eta}$ can be viewed as a function on $\mathbb T^2$. 

Therefore, 
\[
\mathcal{F}f(n_1,n_2)=\mathcal{F}f(n_1,n_2)\cdot \eta(n_1,n_2). 
\]
We apply inverse Fourier transform to see that 
\[
\begin{split}
f(x,t)=&f\ast_{\mathbb T^2} \hat{\eta}(x,t)
\\ =& \int_{\mathbb T^2} f(x-y,t-s)\hat{\eta}(y,s) dyds. 
\end{split}
\]
Because of the support of $\hat{\eta}$, we have  
\begin{equation}  \label{lcp}
|f(x,t)|\lesssim \Big( N^3 \int_{B(x,t)} |f(x,t)|^p dxdt \Big)^{\frac{1}{p}}    
\end{equation}
for any $p\ge 1$, where $B(x,t)$ is a $10 N^{-1}\times 10 N^{-2}$ box centered at $(x,t)$ in $\mathbb T^2$. Sometimes, we may omit the constant $10$. It shows that we can bound a local supremum of $|f|$ by the average value of the $L^p$ norm of $|f|$ in the $N^{-1}\times N^{-2}$-neighborhood of point attaining the supremum. This is the locally constant property we are referring to.

\section{Proof of Theorem \ref{main thm}} 

In this section, we use Corollary \ref{dist of lvl sets} to prove our main Theorem \ref{main thm}. After rescaling, and use definition \eqref{def G}, we can restate Corollary \ref{dist of lvl sets} as the following. 
\begin{corollary}  \label{coro reform}
Suppose that $R^{\frac{1}{2}}=N$ is an integer, for $j\in [1,N]\cap \mathbb Z$, define 
\[
\overline{S_j}=[0,R^{\frac{1}{2}}]\times \big[(j-1)R^{\frac{1}{2}},j R^{\frac{1}{2}} \big]
\]
to be a box of radius $R^{\frac{1}{2}}$ in the vertical rectangle $[0,R^{\frac{1}{2}}]\times [0,R]$. Let $\mu\in [R^{\frac{1}{8}},R^{\frac{1}{4}}]$ be a dyadic number, then the number of boxes $\overline{S_j}$ such that there is a point $(x_j,t_j)$ in $\overline{S_j}$ satisfying $|G(x_j,t_j)|\sim \mu \|a_n\|_{l^2}$ is bounded as 
\[
\lessapprox R \mu^{-4}. 
\]
\end{corollary}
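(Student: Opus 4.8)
The plan is to obtain Corollary \ref{coro reform} as nothing more than a dilation of Corollary \ref{dist of lvl sets}, read in rescaled coordinates. Write $N = R^{1/2}$. Comparing \eqref{def G} with the torus sum of Theorem \ref{L4 thm}, one has the pointwise identity
\[
G(x,t) = f\!\left(\frac{x}{N},\frac{t}{N^2}\right), \qquad f(u,v) = \sum_{n=1}^N a_n e(nu+n^2 v),
\]
with the \emph{same} coefficient sequence $\{a_n\}$, so that $\|a_n\|_{l^2}$ is unchanged. Equivalently, $G = f\circ\Phi^{-1}$ where $\Phi(u,v) = (Nu, N^2 v)$ is the dilation carrying the fundamental domain $[0,1]^2$ of $\mathbb T^2$ bijectively onto $[0,R^{1/2}]\times[0,R]\subset B_R$.

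First I would check that $\Phi$ matches up every object appearing in the two statements. Since $N^2/N = N = R^{1/2}$, the horizontal strip $S_j = [0,1]\times[\frac{j-1}{N},\frac{j}{N}]$ is sent by $\Phi$ onto $[0,N]\times[(j-1)N, jN] = [0,R^{1/2}]\times[(j-1)R^{1/2}, jR^{1/2}] = \overline{S_j}$, and a $\frac1N\times\frac1{N^2}$ box $B_j\subset S_j$ is sent onto a $1\times 1$ box $\Phi(B_j)\subset\overline{S_j}$. The dyadic range $\lambda\in[N^{1/4},N^{1/2}]$ of Corollary \ref{dist of lvl sets} becomes precisely $\mu\in[R^{1/8},R^{1/4}]$. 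Finally, because $|G| = |f|\circ\Phi^{-1}$ is a pointwise equality, a point $(x_j,t_j)\in\overline{S_j}$ with $|G(x_j,t_j)|\sim\mu\|a_n\|_{l^2}$ corresponds bijectively to a point of $S_j$ with $|f|\sim\mu\|a_n\|_{l^2}$. Hence the number of boxes $\overline{S_j}$ that contain such a point is exactly $\#_\mu$ from Corollary \ref{dist of lvl sets}, and I would conclude that this count is $\lessapprox N^2\mu^{-4} = R\mu^{-4}$, which is the claimed bound.

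The only point that calls for a word of care — the closest thing to an obstacle here — is the scaling of the locally constant scale: Theorem \ref{L4 thm} takes $B_j$ to be the largest box preserving the locally constant property of a quadratic sum, and one must confirm this scale is transported correctly by $\Phi$. This is immediate from \eqref{lcp}: $|f|$ is essentially constant on $N^{-1}\times N^{-2}$ boxes, so $|G| = |f\circ\Phi^{-1}|$ is essentially constant on their $\Phi$-images, which are unit boxes — consistent with the unit-box normalization used for $G$ throughout (compare the definition of the set $U$ in the introduction). Apart from this bookkeeping, all of the analytic content already resides in Theorem \ref{L4 thm} and Corollary \ref{dist of lvl sets}; Corollary \ref{coro reform} is a formal rescaling of them and requires no new estimates. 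Alternatively, one could first rescale \eqref{l4} to get the $L^4\big(\bigcup_j \Phi(B_j)\big)$ bound for $G$ and then run the pigeonholing argument that yields Corollary \ref{dist of lvl sets} directly for $G$; this gives the same conclusion.
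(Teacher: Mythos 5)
Your proposal is correct and matches the paper's treatment: the paper obtains Corollary \ref{coro reform} precisely by observing that $G(x,t)=f(x/N,t/N^2)$ with $N=R^{1/2}$, so that the statement is Corollary \ref{dist of lvl sets} read after the dilation $(u,v)\mapsto(Nu,N^2v)$, which carries the strips $S_j$ to the boxes $\overline{S_j}$, the range $[N^{1/4},N^{1/2}]$ to $[R^{1/8},R^{1/4}]$, and the bound $N^2\lambda^{-4}$ to $R\mu^{-4}$. You have simply written out the bookkeeping the paper leaves implicit.
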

\begin{proof}[Proof of Theorem \ref{main thm}]
We first decompose the weight 
\[
\omega=\sum_{\substack{\mu\le R^{\frac{1}{4}}\\ \mu \, \text{dyadic}}}\omega_\mu, 
\]
so that $|G|\sim \mu \|a_n\|_{l^2}$ on the support of $\omega_\mu$. Now it suffices to consider a particular $\mu$ since those $\mu < R^{-10}$ make little contribution in \eqref{key inequa}, and the number of $\mu\in [R^{-10},R^{\frac{1}{4}}]$ is $\lessapprox 1$. By our assumption on $\omega$, $\omega_\mu(B_R)\le R$. Our goal is to show that 
\begin{equation}
\mu^2 \omega_\mu(B_R)\lessapprox R \sup_{T: R\times R^{\frac{1}{2}} \text{tube}} \omega_\mu(T)^{\frac{1}{2}}.
\end{equation}

From now on, $T$ is always an $R\times R^{\frac{1}{2}}$ tube. 
 
If $\mu < R^{\frac{1}{8}}$, then we use
\[
\sup_{T} \omega_\mu(T)\ge R^{-\frac{1}{2}} \omega_\mu(B_R), 
\]
since the right hand side is the average value of $\omega_\mu(T)$ when $T$ varies. By the assumption that $\omega_{\mu}(B_R)\le R$, 
\[
\begin{split}
\mu^2 \omega_\mu(B_R)  \le &  R^{\frac{1}{4}}\cdot \omega_\mu(B_R)^{\frac{1}{2}+\frac{1}{2}}  
\\ \le & R^{\frac{3}{4}} \omega_\mu(B_R)^{\frac{1}{2}}
\\ \le & R \sup_{T} \omega_\mu(T)^{\frac{1}{2}}. 
\end{split}
\]

Now it suffices to consider $\mu \in [R^{\frac{1}{8}},R^{\frac{1}{4}}]$. Define the horizontal tubes 
\[
T_j=[0,R]\times [(j-1)R^{\frac{1}{2}}, jR^{\frac{1}{2}}], \quad j\in[1,R^{\frac{1}{2}}]\cap \mathbb Z.  
\]
By Corollary \ref{coro reform} and the periodicity of $G$ in the $x$-direction, we know that there are 
\[
\lessapprox R \mu^{-4}
\]
tubes $T_j$ s.t. $\omega_\mu(T_j)\neq 0$, therefore, 
\[
\omega_\mu(B_R)\lessapprox R\mu^{-4} \sup_{T} \omega_\mu(T). 
\]
Then 
\[
\begin{split}
\mu^2\omega_\mu(B_R)= & \mu^2 \omega_\mu(B_R)^{\frac{1}{2}+\frac{1}{2}}   
\\ \le & \omega_\mu(B_R)^{\frac{1}{2}} \cdot R^{\frac{1}{2}} \sup_{T} \omega_\mu(T)^{\frac{1}{2}} 
\\ \le & R \sup_{T} \omega_\mu(T)^{\frac{1}{2}}, 
\end{split}
\]
again due to $\omega_\mu(B_R)\le R$. We are done with the proof. 
\end{proof}

\section{Lemmas}

We need two lemmas to prove Theorem \ref{L4 thm}. The first one concerns some incidence estimates about rational numbers.
\begin{lemma}  \label{inci lemma}
Fix a dyadic $Q\in [1,N]$ and let $\mathcal{S}_Q$ be the set of all reduced fractions $\frac{a}{q}\in [-1,1]$ such that $q\sim Q$. We cut the unit interval $I=[0,1]=\bigcup_{j=1}^N I_j$ into $N$ subintervals, where  
\[
I_j=\big[\frac{j-1}{N},\frac{j}{N}\big],  \quad j\in [1,N]\cap \mathbb N, 
\]
and pick a point $t_j$ from each $I_j$. Each triple $(i,j,\frac{a}{q})$ such that 
\[
t_i-t_j=\frac{a}{q}+O(\frac{1}{QN}) \text{ with } \frac{a}{q} \in \mathcal{S}_Q
\]
is called an incidence.  Let $1\le M\le N$ and consider a subset $W_M$ of $[1,N]\cap \mathbb Z$. Then the number of incidences $(i,j,\frac{a}{q})$ with $i,j\in W_M$ is bounded by $C_\epsilon N^\epsilon QM$.   
\end{lemma}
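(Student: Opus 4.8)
The plan is to regard the count as a \emph{restricted additive energy} of the point set $\{t_i:i\in W_M\}\subset[0,1]$ with respect to the denominator-$Q$ fractions, and to bound it by the circle method. First I would reduce to a pair count: two distinct reduced fractions with denominators $\sim Q$ differ by $\gtrsim Q^{-2}$, whereas the window $|t_i-t_j-\frac aq|\le \frac{C}{QN}$ has length $\lesssim Q^{-2}$ (using $Q\le N$), so each pair $(i,j)\in W_M\times W_M$ participates in $O(1)$ incidences. Hence it suffices to bound
\[
\#\bigl\{(i,j)\in W_M^2:\ t_i-t_j\bmod 1\in\mathfrak M\bigr\},\qquad \mathfrak M:=\bigcup_{q\sim Q}\ \bigcup_{0\le a<q}\Bigl(\tfrac aq-\tfrac{C}{QN},\ \tfrac aq+\tfrac{C}{QN}\Bigr)\bmod 1 ,
\]
since $\mathfrak M$ contains every difference value realizing an incidence.

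Next I would introduce a nonnegative Fejér majorant of $1_{\mathfrak M}$. Let $L\sim QN$ and let $F_L$ be the Fejér kernel of degree $L$, so $F_L\ge 0$, $F_L\gtrsim L$ on $\{\|x\|\le c/L\}$, and $\widehat{F_L}$ is supported in $|m|\le L$ with $|\widehat{F_L}|\le 1$. Putting $\Psi:=\tfrac1{cL}\sum_{q\sim Q}\sum_{0\le a<q}F_L\bigl(\,\cdot-\tfrac aq\bigr)$, we get $\Psi\ge 0$, $\Psi\ge 1$ on $\mathfrak M$, $\widehat\Psi$ supported in $|m|\lesssim QN$, $\widehat\Psi(0)=|\mathfrak M|\lesssim \frac{Q^2}{QN}=\frac QN$, and for $0<|m|\lesssim QN$
\[
|\widehat\Psi(m)|\ \lesssim\ \frac1{QN}\sum_{\substack{q\sim Q\\ q\mid m}}q\ \lesssim\ \frac1{QN}\,Q\,d(m)\ \lesssim_\epsilon\ \frac{N^\epsilon}{N},
\]
$d(m)$ being the divisor function (equivalently, if one keeps the coprimality $(a,q)=1$ the inner sum becomes a Ramanujan sum and one uses $|c_q(m)|\le\gcd(q,m)$, with the same outcome).

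Now the circle-method step. Expanding in Fourier series,
\[
\#\bigl\{(i,j)\in W_M^2:\ t_i-t_j\in\mathfrak M\bigr\}\ \le\ \sum_{i,j\in W_M}\Psi(t_i-t_j)\ =\ \sum_{m}\widehat\Psi(m)\,\Bigl|\sum_{i\in W_M}e(mt_i)\Bigr|^2 .
\]
The $m=0$ term is $\widehat\Psi(0)\,|W_M|^2\lesssim \frac QN M^2\le QM$ since $M\le N$. For $m\ne0$, a second Fejér majorant of $1_{|m|\le L}$ gives $\sum_{0<|m|\lesssim QN}\bigl|\sum_i e(mt_i)\bigr|^2\lesssim\sum_{i,j\in W_M}\min\!\bigl(QN,\ \|t_i-t_j\|^{-1}\bigr)$; here the decisive input is that the points $t_j$ ($j\in W_M$) lie in the \emph{pairwise disjoint} intervals $I_j$ of length $N^{-1}$, so every arc of length $r$ on $\mathbb T$ contains $\lesssim rN+1$ of them. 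Splitting dyadically in $\|t_i-t_j\|$ then yields $\sum_{j}\min(QN,\|t_i-t_j\|^{-1})\lesssim_\epsilon N^\epsilon QN$ for each fixed $i$, hence $\sum_{i,j}\min(QN,\|t_i-t_j\|^{-1})\lesssim_\epsilon N^\epsilon QNM$; multiplying by the bound $N^\epsilon/N$ for $|\widehat\Psi(m)|$ bounds the $m\ne0$ contribution by $\lesssim_\epsilon N^\epsilon QM$. Summing the two parts proves the lemma.

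The main obstacle is precisely this gain over the trivial estimate: a term-by-term count of incidences gives only $O(M^2)$, which exceeds $N^\epsilon QM$ exactly when $M\gg Q$. The content of the argument — made quantitative by the bookkeeping above — is that forcing $t_j\in I_j$ localizes the point set enough that its major-arc additive energy at scale $Q$ is genuinely of size $\sim QM$ up to $N^\epsilon$. Making the off-diagonal sum $\sum\min(QN,\|t_i-t_j\|^{-1})$ come out as $\approx QNM$ rather than something larger, which relies both on keeping the Fourier support of $\Psi$ at $\lesssim QN$ and on the interval-localization of the $t_j$, is the one place where care is needed; the divisor-function/Ramanujan-sum input is routine.
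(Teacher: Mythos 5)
Your argument is correct and is essentially the paper's proof in different clothing: both majorize the major-arc indicator by a nonnegative trigonometric polynomial with Fourier support $\lesssim QN$, expand the pair count into $\sum_m \widehat{\Psi}(m)\bigl|\sum_{i\in W_M} e(mt_i)\bigr|^2$, bound the zero frequency by $\frac{Q}{N}M^2\le QM$, and control the nonzero frequencies by an arithmetic factor (your divisor bound for the full residue sum, the paper's Ramanujan-sum bound $\sum_{q\sim Q}|c_q(n)|\lessapprox Q$) together with the $N^{-1}$-separation of the points $t_j$. The only differences are cosmetic — Fej\'er kernel in place of the Bloom--Maynard function $\psi$, the preliminary reduction from triples to pairs, and a dyadic $\sum_{i,j}\min(QN,\|t_i-t_j\|^{-1})$ estimate where the paper simply notes that the relevant kernel is supported on the near-diagonal set $\{\|t_i-t_j\|\le \frac{1}{QN}\}$ of size $\lesssim M$.
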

\begin{remark}
This lemma is sharp up to a $N^\epsilon$ factor for $M\ge 10 Q$, as we can fix some $q\sim Q$ and consider the set of points 
\[
\begin{split}
& 0,\frac{1}{q},\frac{2}{q},\dots, \frac{q-1}{q},   
\\ & \frac{1}{N},\frac{1}{N}+\frac{1}{q},\dots, \frac{1}{N}+\frac{q-1}{q},
\\ & \dots,
\\ & \frac{m}{N},\frac{m}{N}+\frac{1}{q},\cdots, \frac{m}{N}+\frac{q-1}{q},  
\end{split}
\]
where $m\le \frac{M}{10Q}$. The number of incidences with $t_i,t_j$ in each row is $\approx Q^2$, and there are $\sim \frac{M}{Q}$ rows.     
\end{remark}
\begin{proof}
We can find a smooth function $\psi$ such that $\psi\ge 0$, $\hat{\psi}\ge 0$, $\hat{\psi}\ge 1$ on $[-1,1]$ and $\hat{\psi}$ is compactly supported on $[-2,2]$. Such a function is constructed in the proof of Lemma 7 in Bloom and Maynard's paper \cite{BM}. Then we relax the condition 
\[
t_i-t_j=\frac{a}{q}+O(\frac{1}{QN})
\]
to that 
\[
t_i-t_j-\frac{a}{q} \equiv O(\frac{1}{QN}) (\text{mod }1).
\]
Since the left hand side is absolutely bounded by $2$, this relaxation of condition is harmless. Now existence of an incidence can be detected by the following exponential sum
\[
\frac{1}{QN}\sum_{n}\psi\big(\frac{n}{QN}\big) e\big((x_i-x_j-\frac{a}{q})n \big)=\sum_{k} \hat{\psi}\big( \frac{x_i-x_j-\frac{a}{q}-k}{\frac{1}{QN}} \big),
\]
since we assume that $\hat{\psi}$ is nonnegative and $\ge 1$ on $[-1,1]$. By symmetry, it suffices to consider $\frac{a}{q}\ge 0$. Therefore the number of incidences is bounded above by 
\begin{equation}  \label{counting}
\begin{split}
& \sum_{q\sim Q} \sum_{\substack{0\le a< q\\ (a,q)=1}} \sum_{i,j\in W_M} \frac{1}{QN} \sum_{n}\psi\big(\frac{n}{QN}\big) e\big((t_i-t_j-\frac{a}{q})n \big)
\\ =& \frac{1}{QN} \sum_{n} \psi\big(\frac{n}{QN}\big) \big|\sum_{i\in W_M} e(t_i n) \big|^2 \sum_{\substack{0\le a< q\\ (a,q)=1}}  e(-\frac{a}{q}n). 
\end{split}    
\end{equation}
 
We use the standard Ramanujan's sum notation 
\[
c_q(n)=\sum_{\substack{0\le a< q\\ (a,q)=1}}  e(-\frac{a}{q}n)=\mu\big( \frac{q}{(q,n)} \big) \frac{\phi(q)}{\phi(\frac{q}{(q,n)})},
\]
where $\mu$ is the M\"obius function and $\phi$ is Euler's totient function. 

Therefore, 
\[
\big|c_q(n)\big|\le (q,n), 
\]
and thus 
\[
\sum_{q\sim Q} c_q(n)\lesssim_\epsilon N^{\epsilon }Q
\]
for $n\neq 0$. In the above sum \eqref{counting}, we can distinguish the cases when $n\neq 0$ and $n=0$. Indeed, since we also assume that $\psi$ is nonnegative, the second line of \eqref{counting} is 
\[
\begin{split}
\lessapprox & \frac{1}{QN} M^2 Q^2  \qquad \qquad \qquad (n=0)
\\+& \frac{1}{QN} \sum_{n} \psi\big(\frac{n}{QN}\big) \big|\sum_{i\in W_M} e(t_i n) \big|^2 \cdot Q  
\\ \lessapprox & QM+ Q\cdot \#\{(i,j)\in W_M^2:\|t_i-t_j\|\le \frac{1}{QN}\}
\\ \lessapprox & QM. 
\end{split}
\]
\end{proof}

The second lemma tells us that the $L^4$ estimate \eqref{l4} is equivalent to a weighted $L^2$ estimate. Let us recall the definitions in Theorem \ref{L4 thm}. The quadratic sum 
\[
f(x,t)=\sum_{n=1}^N a_n e(nx+n^2 t) 
\]
is define on the torus $\mathbb T$. In each horizontal strip 
\[
S_j=[0,1]\times \big[\frac{j-1}{N},\frac{j}{N}\big] \subset [0,1]^2, \quad j\in [1,N]\cap \mathbb Z,
\]
we pick a $\frac{1}{N}\times \frac{1}{N^2}$ box $B_j$, and define $E=\bigcup_{j=1}^N B_j$ to be the union of these boxes. Moreover, for $1\le M\le N$ and $W_M\subset [1,N]\cap \mathbb Z$, define $E_{W_M}=E_M=\bigcup_{j\in W_M}B_j$. Here $W_M$ is not an abuse of notation, it is the same subset as considered in Lemma \ref{inci lemma}. Then we have the following equivalence. 
\begin{lemma}
\begin{equation} \label{l4 replic}
\Big\|\sum_{n=1}^N a_n e(nx+n^2 t)\Big\|_{L^4(E)} \lessapprox N^{-\frac{1}{4}} \|a_n\|_{l^2}    
\end{equation}
is equivalent to 
\begin{equation}  \label{local l2}
\Big\|\sum_{n=1}^N a_n e(nx+n^2 t)\Big\|_{L^2(E_{W_M})} \lessapprox M^{\frac{1}{4}} N^{-1} \|a_n\|_{l^2}     
\end{equation}
for any $1\le M\le N$ and any $W_M$.
\end{lemma}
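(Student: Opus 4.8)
The plan is to establish the two implications separately. The direction \eqref{l4 replic} $\Rightarrow$ \eqref{local l2} is immediate: since $E_{W_M}\subseteq E$, Hölder's inequality on $E_{W_M}$ gives $\|f\|_{L^2(E_{W_M})}\le \|f\|_{L^4(E_{W_M})}\,|E_{W_M}|^{1/4}\le \|f\|_{L^4(E)}\,|E_{W_M}|^{1/4}$, and because each box $B_j$ has measure $|B_j|=N^{-3}$ we have $|E_{W_M}|=MN^{-3}$, so the right side is $\lessapprox N^{-1/4}\|a_n\|_{l^2}\cdot M^{1/4}N^{-3/4}=M^{1/4}N^{-1}\|a_n\|_{l^2}$, which is \eqref{local l2}. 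The content of the lemma is the converse, and it is here that the quantification over \emph{all} subsets $W_M$ is essential, since below one must apply it to the superlevel strips of $f$ itself.

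For \eqref{local l2} $\Rightarrow$ \eqref{l4 replic}, I would first use the locally constant property \eqref{lcp} with $p=2$: for $(x,t)\in B_j$ the averaging box $B(x,t)$ lies inside a bounded dilate $B_j^\ast$ of $B_j$, and these dilates have $O(1)$ overlap, so $\sup_{B_j}|f|^2\lesssim N^3\int_{B_j^\ast}|f|^2$. Writing $\nu_j:=\big(\int_{B_j^\ast}|f|^2\big)^{1/2}$, and using $\int_{B_j}|f|^4\le |B_j|\sup_{B_j}|f|^4$, this gives
\[
\int_E |f|^4=\sum_{j=1}^{N}\int_{B_j}|f|^4\;\lesssim\; N^3\sum_{j=1}^{N}\nu_j^{4}.
\]
Now I would dyadically pigeonhole the strips by the size of $\nu_j$: for dyadic $\nu$ set $W_\nu=\{j:\nu_j\sim\nu\}$ and $M_\nu=|W_\nu|$. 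Using the bounded overlap of the $B_j^\ast$ and applying \eqref{local l2} to the family $W_\nu$ (here $W_M=W_\nu$, $M=M_\nu$) one gets $M_\nu\nu^2\sim\sum_{j\in W_\nu}\nu_j^2\lessapprox M_\nu^{1/2}N^{-2}\|a_n\|_{l^2}^2$, hence $M_\nu^{1/2}\nu^2\lessapprox N^{-2}\|a_n\|_{l^2}^2$, i.e. $M_\nu\nu^4\lessapprox N^{-4}\|a_n\|_{l^2}^4$.

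It then remains to sum over the dyadic scales $\nu$. By Cauchy–Schwarz $\|f\|_\infty\le N^{1/2}\|a_n\|_{l^2}$, so $\nu_j^2\le|B_j^\ast|\,\|f\|_\infty^2\lesssim N^{-2}\|a_n\|_{l^2}^2$ and only $\nu\lesssim N^{-1}\|a_n\|_{l^2}$ occur; and the scales with $\nu\le N^{-10}\|a_n\|_{l^2}$ contribute at most $N\cdot N^{-40}\|a_n\|_{l^2}^4\ll N^{-4}\|a_n\|_{l^2}^4$ to $\sum_j\nu_j^4$. Thus only $\lessapprox 1$ dyadic scales are relevant, and $\int_E|f|^4\lesssim N^3\sum_\nu M_\nu\nu^4\lessapprox N^3\cdot N^{-4}\|a_n\|_{l^2}^4=N^{-1}\|a_n\|_{l^2}^4$, which is \eqref{l4 replic}. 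The only genuine obstacle is the bookkeeping around the locally constant property — passing from the $L^4$ mass of $f$ on $B_j$ to the square of its $L^2$ mass on a slightly enlarged box while keeping the overlaps $O(1)$, and matching those enlarged boxes against the sets $E_{W_M}$ appearing in \eqref{local l2}; once that is set up, everything else is Hölder and dyadic pigeonholing.
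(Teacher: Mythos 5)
Your proof is correct and follows essentially the same route as the paper: the easy direction via H\"older's inequality and $|E_{W_M}|=MN^{-3}$, and the converse via the locally constant property plus dyadic pigeonholing of the strips by the (locally constant) size of $|f|$, followed by an application of \eqref{local l2} to each pigeonholed family. The only differences are cosmetic: the paper reduces to a single dominant scale $\lambda$ and treats the locally constant step heuristically, whereas you sum over the $\lessapprox 1$ relevant dyadic scales and handle the enlarged boxes $B_j^\ast$ explicitly (which, as you note, requires invoking \eqref{local l2} for the $O(1)$ admissible $\frac{1}{N}\times\frac{1}{N^2}$ boxes covering each $B_j^\ast$ --- harmless, since the boxes $B_j$ in the setup are arbitrary and the paper's proof of \eqref{local l2} does not depend on their choice).
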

\begin{proof}
The local $L^2$ estimate \eqref{local l2} follows from inequality \eqref{l4 replic} by H\"older's inequality and the fact that $|E_M|=MN^{-3}$.   

To prove the $L^4$ estimate \eqref{l4 replic}, by the locally constant property, we can think of $|f|$ to be of the same size on each single box $B_j$ of dimensions $\frac{1}{N}\times \frac{1}{N^2}$. Then by dyadic pigeonholing, there exists a $\lambda\ge R^{-100}\|a_n\|_{l^2}$ and a subset $W_M\subset [1,N]$ such that  
\[
\Big\|\sum_{n=1}^N a_n e(nx+n^2 t)\Big\|_{L^4(E)}\lessapprox \Big\|\sum_{n=1}^N a_n e(nx+n^2 t)\Big\|_{L^4(E_{W_M})} 
\]
and $|f|\sim \lambda$ on  $E_{W_M}$. By the local $L^2$ estimate \eqref{local l2}, we know that 
\[
\lambda |E_{W_M}|^{\frac{1}{2}} \lessapprox M^{\frac{1}{4}} N^{-1} \|a_n\|_{l^2}=|E_{W_M}|^{\frac{1}{4}} N^{-\frac{1}{4}} \|a_n\|_{l^2}, 
\]
which is equivalent to that 
\[
\lambda |E_{W_M}|^{\frac{1}{4}} \lessapprox N^{-\frac{1}{4}} \|a_n\|_{l^2}. 
\]
This is exactly what we want in \eqref{l4 replic}. 
\end{proof}
By this lemma, Theorem \ref{L4 thm} follows from the local $L^2$-estimate \eqref{local l2}. 

\section{Proof of the local $L^2$-estimate \eqref{local l2}} 

\begin{proof}
We abbreviate $E_{W_M}$ by writing $E_M$ and recall that 
\[
E_M=\bigcup_{j\in W_M} B_j,
\]
where each $B_j$ is in the horizontal strip $S_j$. By duality, the inequality \eqref{local l2} is equivalent to 
\[
\langle \sum_{n=1}^N a_n e(nx+n^2 t), h1_{E_M} \rangle_{\mathbb T^2} \lessapprox M^{\frac{1}{4}} N^{-1} \|a_n\|_{l^2} \|h\|_{L^2}
\]
for each $h\in L^2(E_M)$ and for each sequence $\{a_n\}_{n=1}^N$. Here $1_{E_M}$ denotes the characteristic function of the set $E_M$. Again by duality, it is further equivalent to 
\[
\sum_{n=1}^N  \big|\langle e(nx+n^2 t), h1_{E_M} \rangle_{\mathbb T^2} \big|^2 \lessapprox M^{\frac{1}{2}} N^{-2} \|h\|_{L^2}^2
\]
for each $h\in L^2(E_M)$. This can be rewritten as 
\[
\langle K\ast h 1_{E_M}, h 1_{E_M} \rangle_{\mathbb T^2} \lessapprox M^{\frac{1}{2}} N^{-2} \|h\|_{L^2}^2
\]
where $K$ is the kernel defined as 
\[
K(x,t)=\sum_{n=1}^N e(nx+n^2 t)
\]
and the convolution is taken on the torus. Now we have got rid of the coefficients $\{a_n\}$ and can apply circle method to the kernel $K$. By Dirichlet's approximation, given $t\in [0,1]$, there is a unique $\frac{a}{q}$ with $(a,q)=1$, $q\le N$ such that 
\[
\big|t-\frac{a}{q} \big|\le \frac{1}{qN}. 
\]
Therefore, given a dyadic $Q\in [1,N/(\log N)^{10}]$, we define the major arcs 
\[
\mathfrak{M}_q=\sum_{\substack{0\le a<q\\(a,q)=1}} \big[ \frac{a}{q}-\frac{1}{QN},\frac{a}{q}+\frac{1}{QN} \big]
\]
for $q\sim Q$, and 
\[
\mathfrak{M}_Q=\sum_{q\sim Q}\mathfrak{M}_q. 
\]
These major arcs are disjoint from each other. And outside the major arcs,
\[
|K(x,t)|\lessapprox N^{\frac{1}{2}}. 
\]
Define $\varphi$ to be a bump function which is $=1$ on $[-1,1]$ and is compactly supported on $[-2,2]$. Then we decompose 
\[
K=\sum_{\substack{1\le Q\le N/(\log N)^{10}\\ Q \, \text{dyadic}}} K_Q+K'
\]
where 
\[
K_Q(x,t)=K(x,t)\cdot \sum_{q\sim Q} \sum_{\substack{0\le a<q\\(a,q)=1}} \varphi\big( \frac{t-a/q}{1/(QN)} \big),
\]
and 
\[
K'=K-\sum_{\substack{1\le Q\le N/(\log N)^{10}\\ Q \, \text{dyadic}}} K_Q. 
\]
Since the number of $Q$ is bounded by $2 \log N$, it suffices to show that for each $Q$, 
\begin{equation}  \label{KQ}
\langle K_Q\ast h 1_{E_M}, h 1_{E_M} \rangle_{\mathbb T^2} \lessapprox M^{\frac{1}{2}} N^{-2} \|h\|_{L^2}^2,    
\end{equation}
and 
\begin{equation} \label{K'}
\langle K'\ast h 1_{E_M}, h 1_{E_M} \rangle_{\mathbb T^2} \lessapprox M^{\frac{1}{2}} N^{-2} \|h\|_{L^2}^2.   
\end{equation}

We prove \eqref{K'} first. Since $\|K'\|_{L^\infty}\lessapprox N^{\frac{1}{2}}$, we can proceed as follows. 
\begin{equation}
\begin{split}
\langle K'\ast h 1_{E_M}, h 1_{E_M} \rangle_{\mathbb T^2}
=&\int_{E_M} \int_{E_M} K'(x-y,t-s)h(y,s)dyds \overline{h(x,t)} dxdt  
\\ \lessapprox & N^{\frac{1}{2}} \int_{E_M}\int_{E_M} |h(y,s)|dyds|h(x,t)|dxdt
\\ = & N^{\frac{1}{2}} \|h\|_{L^1(E_M)}^2 
\\ \le & N^{\frac{1}{2}} \|h\|_{L^2}^2 \cdot |E_M|   \quad (\text{H\"older's inequality})
\\ =&N^{\frac{1}{2}} MN^{-3} \|h\|_{L^2}^2
\\ \le & M^{\frac{1}{2}}N^{-2} \|h\|_{L^2}^2,
\end{split}
\end{equation}
since $M\le N$. 

Then it remains to show the estimate \eqref{KQ}. Let us make some preliminary reduction. By dyadic pigeonholing, there exist dyadic numbers $\lambda_1,\lambda_2>0$ and subsets $Y_1,Y_2\subset E_M$ such that $|h|\sim \lambda_l$ on $Y_l$, and 
\[
\begin{split}
&\big|\langle K_Q\ast h1_{E_M},h1_{E_M}\rangle \big|  \\ \lessapprox & \big| \langle K_Q\ast h1_{Y_1},h1_{Y_2} \rangle \big|.
\end{split}
\]
Also, we know that 
\begin{equation}  \label{h l2 low}
\|h\|_{L^2}^2 \gtrsim \lambda_l^2 |Y_l|,\quad l=1,2.    
\end{equation}
Expand the inner product and use H\"older's inequality twice, we have 
\begin{equation}  \label{K_Q comp}
\begin{split}
&\langle K_Q\ast h1_{Y_1}, h1_{Y_2} \rangle 
\\ =& \int_{\mathbb T^2}\int_{\mathbb T^2} K_Q(x-y,t-s)h1_{Y_1}(y,s) \overline{h1_{Y_2}}(x,t)dxdtdyds
\\ \le & \|h\|_{L^1(Y_1)}^{\frac{1}{2}} \|h\|_{L^1(Y_2)}^{\frac{1}{2}}
\\ \times & \big( \int \int |K_Q(x-y,t-s)|^2 \cdot |h1_{Y_1}(y,s)| \cdot |h1_{Y_2}(x,t)|dydsdxdt \big)^{\frac{1}{2}}
\\ \le & (\lambda_1\lambda_2)^{\frac{1}{2}}\cdot |Y_1|^{\frac{1}{4}}|Y_2|^{\frac{1}{4}} \|h\|_{L^2} 
\\ \times & \big( \int \int |K_Q(x-y,t-s)|^2 \cdot 1_{Y_1}(y,s) \cdot 1_{Y_2}(x,t)dydsdxdt \big)^{\frac{1}{2}}. 
\end{split}   
\end{equation}
We compute the inner integral. By the bound $\|K_Q\|_{L^\infty}\lesssim \frac{N}{Q^{\frac{1}{2}}}$ \cite[Lemma 3.18]{B} and the trivial estimate $|B_j|\le N^{-3}$ for each $\frac{1}{N}\times \frac{1}{N^2}$ box $B_j$, we have 
\begin{equation}  \label{K_Q comp 2}
\begin{split}
&\int \int |K_Q(x-y,t-s)|^2 \cdot 1_{Y_1}(y,s) \cdot 1_{Y_2}(x,t)dydsdxdt    
\\ =&\sum_{i,j\in W_M} \int\int |K_Q(x-y,t-s)|^2 \cdot 1_{Y_1\cap B_i}(y,s) \cdot 1_{Y_2\cap B_j}(x,t)dydsdxdt    
\\ \le & \frac{N^2}{Q} \cdot N^{-6} \cdot \#\big\{(i,j)\in W_M^2: \exists (x_i,t_i)\in B_i, (x_j,t_j)\in B_j, 
\\ \qquad & t_i-t_j=\frac{a}{q}+O(\frac{1}{QN}) \text{ for some } (a,q)=1, q\sim Q \big\}. 
\end{split}    
\end{equation}
Since each box $B_j$ has thickness $\frac{1}{N^2}$ in the $t$-direction and $\frac{1}{QN}\ge \frac{1}{N^2}$, in the above counting problem, we can think of the projection of each $B_j$ onto the $t$-axis as a discrete point, thus Lemma \ref{inci lemma} directly implies that the number of pairs $(i,j)$ is bounded by $C_\epsilon N^{\epsilon} MQ$. Insert this estimate into \eqref{K_Q comp 2} and then into \eqref{K_Q comp}, we have 
\[
\begin{split}
&\langle K_Q\ast h1_{Y_1}, h1_{Y_2} \rangle 
\\ \lessapprox & M^{\frac{1}{2}} N^{-2} (\lambda_1 \lambda_2)^{\frac{1}{2}} |Y_1|^{\frac{1}{4}}|Y_2|^{\frac{1}{4}}\|h\|_{L^2}
\\ = & M^{\frac{1}{2}} N^{-2} \big(\lambda_1^2|Y_1|\big)^{\frac{1}{4}} \big(\lambda_2^2|Y_2|\big)^{\frac{1}{4}}  \|h\|_{L^2}
\\ \lessapprox & M^{\frac{1}{2}} N^{-2}\|h\|_{L^2}^2,
\end{split}
\]
where we use \eqref{h l2 low} in the last step. Now the proof for \eqref{KQ} is established. 
\end{proof}

\section{Counterexamples}

In \cite{FRW}, Y. Fu, K. Ren and H. Wang constructed the following example:
\begin{theorem}[FRW]  \label{FRW}
For infinitely many positive integers $N$, there is a convex $C^2$ function $\Gamma$ such that 
\begin{equation}  \label{jarnick curve}
\Gamma\big(\frac{n}{N}\big) \in \frac{\mathbb Z}{N}    
\end{equation}
for $\gtrapprox N^{\frac{2}{3}}$-many integers $n\in [1,N]$. \end{theorem}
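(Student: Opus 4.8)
The plan is to exhibit $\Gamma$ as a rescaled, smoothed \emph{Jarník polygon}, and in fact to prove the slightly stronger assertion that such a $\Gamma$ exists for every sufficiently large integer $N$ (which certainly gives infinitely many). Fix a small absolute constant $c>0$ and put $T=\lfloor cN^{1/3}\rfloor$. Let $v_1,\dots,v_K$ enumerate all primitive lattice vectors $v_i=(a_i,b_i)$ with $1\le a_i\le T$, $1\le b_i\le T$, $\gcd(a_i,b_i)=1$, listed in order of strictly increasing slope $s_i:=b_i/a_i$; the order is well defined and strict because distinct primitive vectors with positive entries have distinct slopes. By a routine summation of Euler's totient function, $K\sim T^2\sim N^{2/3}$, while $\sum_{i\le K}a_i\sim T\sum_{a\le T}\phi(a)\sim T^3$ and likewise $\sum_{i\le K}b_i\sim T^3$, so with $c$ chosen small both sums are at most $N$. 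Setting $P_0=(0,0)$ and $P_k=P_{k-1}+v_k$, the points $P_0,\dots,P_K$ are lattice points whose first coordinates $X_k$ satisfy $0=X_0<X_1<\cdots<X_K\le N$ (strict since $a_i\ge 1$), and whose consecutive chord slopes $s_1<\cdots<s_K$ are strictly increasing; in particular $X_1,\dots,X_K$ are $\sim N^{2/3}$ distinct integers in $[1,N]$.

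It remains to join the $P_k$ by a convex $C^2$ curve. The one point requiring care is that the curve must pass \emph{through} each $P_k$, so the usual ``round the corners'' smoothing of a convex polygon, which pulls the curve off the vertices, is not what we want; instead I would carry out convex Hermite interpolation at the nodes $(X_k,Y_k):=P_k$. The chord slope of any interpolant on $[X_{k-1},X_k]$ is forced to equal $s_k$, and since $s_1<\cdots<s_K$ one can choose node derivatives $m_0<s_1<m_1<s_2<\cdots<s_K<m_K$. On each $[X_{k-1},X_k]$ there is a $C^\infty$ strictly convex function with the prescribed endpoint values $Y_{k-1},Y_k$ and endpoint derivatives $m_{k-1},m_k$ — this is exactly the condition $m_{k-1}<s_k<m_k$ — and by prescribing the higher-order derivatives at each node consistently from the two sides the pieces glue into a single $C^\infty$ strictly convex $\gamma$ on $[X_0,X_K]$; extend $\gamma$ to $[0,N]$ by any convex $C^2$ continuation past $X_K$. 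Finally set $\Gamma(\xi)=\gamma(N\xi)/N$ on $[0,1]$. Then $\Gamma$ is convex and $C^2$, and for $n=X_k$ with $1\le k\le K$ we get $\Gamma(n/N)=\gamma(X_k)/N=Y_k/N\in\mathbb Z/N$, so \eqref{jarnick curve} holds at $\gtrsim N^{2/3}$ integers $n\in[1,N]$.

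The substantive step is the second one: producing an interpolant of the polygon's vertices that is convex and \emph{globally} $C^2$ (indeed $C^\infty$); the vector count, the bound $X_K\le N$, and the rescaling are all standard. In carrying this out one should verify that strict convexity survives across the gluing nodes and, should the downstream application require it, that $\Gamma$ may be taken with $\Gamma''$ bounded above and below by positive absolute constants on $[0,1]$ — the same construction delivers this by choosing the node derivatives $m_k$ and the node second derivatives appropriately.
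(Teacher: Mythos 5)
Your construction is correct, but note that the paper does not prove this statement at all: it is quoted as an external result from the Fu--Ren--Wang paper \cite{FRW}, so there is no internal proof to compare against. What you have written is essentially the classical Jarn\'ik construction, which is also what underlies the cited example: enumerate the primitive vectors $(a,b)$ with $1\le a,b\le T$, $T\sim cN^{1/3}$, in order of increasing slope, so that there are $\sim T^2\sim N^{2/3}$ of them while $\sum a_i\sim\sum b_i\sim T^3\le N$, form the convex polygon of partial sums, and then interpolate through the lattice vertices by a convex $C^2$ (indeed $C^\infty$) curve before rescaling by $N$. Your counts are right, the strict monotonicity of the chord slopes $s_1<\cdots<s_K$ is what makes the Hermite step possible, and the gluing argument (choose node slopes $m_{k-1}<s_k<m_k$ and matching node second derivatives, then solve a convex interpolation problem on each $[X_{k-1},X_k]$, which is feasible precisely because of the slope condition) is routine and correctly flagged as the only point needing care; you even get the statement for all sufficiently large $N$, which is stronger than ``infinitely many.'' The one inaccurate side remark is the parenthetical claim that the same construction gives $\Gamma''$ bounded above and below by absolute constants: with all slopes $b/a\in[1/T,T]$ admitted, $\Gamma'$ varies by $\sim N^{1/3}$ over $[0,1]$, so $\Gamma''$ cannot be $O(1)$ everywhere; to get bounded curvature one must restrict to vectors whose slopes lie in a fixed bounded range (which costs only a constant factor in the count). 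This does not affect the statement being proved, which only requires $\Gamma$ convex and $C^2$.
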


This leads to the following counterexample for Conjecture
\ref{MT general}. 

\begin{lemma}  \label{counterex}
For infinitely many integers $N=R^{\frac{1}{2}}$, there is a convex $C^2$ function $\Gamma$, an $L^2$ function $g$ and a one-dimensional weight $\omega$ such that 
\[
\int_{B_R} |E_\Gamma g|^2 \omega \gtrapprox R^{\frac{1}{12}} \sup_{T} \omega(T)^{\frac{1}{2}} \|g\|_2^2,
\]
where $T$ varies over all $R\times R^{\frac{1}{2}}$ tubes. 
\end{lemma}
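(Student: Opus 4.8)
The plan is to turn the Jarn\'ik-type curve produced by Theorem \ref{FRW} into a one-dimensional weight on which $E_\Gamma g$ is forced to be abnormally large. Fix $N=R^{1/2}$ along the subsequence where Theorem \ref{FRW} applies, let $\Gamma$ be the corresponding convex $C^2$ curve, and put $\mathcal N=\{n\in[1,N]\cap\mathbb Z:\Gamma(n/N)\in\frac1N\mathbb Z\}$ with $m_n:=N\Gamma(n/N)\in\mathbb Z$ for $n\in\mathcal N$, so that $|\mathcal N|\gtrapprox N^{2/3}$. I would take
\[
g=\sum_{n\in\mathcal N}1_{[n/N,\,n/N+w_0]},\qquad w_0=c_1N^{-2},
\]
where $c_1>0$ depends only on $\|\Gamma'\|_\infty$ and is small enough that the phase $x\xi+t\Gamma(\xi)$ varies by at most a small absolute constant over any $\xi$-interval of length $w_0$, uniformly for $(x,t)\in B_R$. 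Then $\|g\|_2^2=|\mathcal N|\,w_0\sim|\mathcal N|N^{-2}$, and I would compare $E_\Gamma g$ with the model exponential sum $F(x,t)=\sum_{n\in\mathcal N}e(\frac nNx+\frac{m_n}Nt)$.

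First I would establish the pointwise bound $|E_\Gamma g(x,t)|\ge w_0|F(x,t)|-\frac15 w_0|\mathcal N|$ for every $(x,t)\in B_R$: by the choice of $w_0$, the $n$-th summand of $E_\Gamma g$ equals $w_0 e(\frac nNx+\frac{m_n}Nt)(1+\theta_n)$ with $|\theta_n|\le\frac15$, and one sums. The crucial structural fact is that $m_n\in\mathbb Z$, so $F$ has period $N$ in both variables; since its frequencies lie in $[0,1]\times[\min\Gamma,\max\Gamma]$ it satisfies $|\nabla F|\lesssim|\mathcal N|$, and $F(Nj,Nk)=|\mathcal N|$ for all $j,k\in\mathbb Z$. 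Hence there is $c_2\in(0,1)$, depending only on $\|\Gamma\|_\infty$, with $|F|\ge\frac12|\mathcal N|$ on the $c_2\times c_2$ box centred at each point of the grid $\mathcal G=(N\mathbb Z)^2\cap B_R$. Letting $U$ be the union of these $\sim N^2$ boxes, I get $|U|\sim N^2$ and $|E_\Gamma g|\gtrsim w_0|\mathcal N|\sim|\mathcal N|N^{-2}$ on $U$. I would then check, by examining balls at scales $r\lesssim c_2$, $c_2\lesssim r\lesssim N$, and $r\gtrsim N$, that $1_U$ is $O(1)$-one-dimensional, so that $\omega:=c_3 1_U$ is a legitimate one-dimensional weight for a small absolute constant $c_3$.

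The final computation is then short. On the one hand $\int_{B_R}|E_\Gamma g|^2\omega\gtrsim(|\mathcal N|N^{-2})^2|U|\gtrsim|\mathcal N|^2N^{-2}$. On the other hand, rescaling by $N^{-1}$ turns any $R\times R^{1/2}=N^2\times N$ tube $T$ into an $N\times1$ tube and $\mathcal G$ into $\mathbb Z^2$; a convex $N\times1$ region meets $\lesssim N$ unit cells, hence contains $\lesssim N$ lattice points, so $\omega(T)\lesssim N$ and therefore $\sup_T\omega(T)^{1/2}\|g\|_2^2\lesssim N^{1/2}\cdot|\mathcal N|N^{-2}$. Dividing and using $|\mathcal N|\gtrapprox N^{2/3}$ gives the ratio $\gtrsim|\mathcal N|N^{-1/2}\gtrapprox N^{1/6}=R^{1/12}$, which is the claim.

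The only deep input is Theorem \ref{FRW}; in fact its exponent $\frac23$, optimal by Jarn\'ik's theorem, is exactly what yields the loss $R^{1/12}$. I expect the main obstacle to be the geometry of the grid set $U$: verifying that $1_U$ is one-dimensional at every scale and, more importantly, the uniform bound $\sup_T\omega(T)\lesssim N$ over all $R\times R^{1/2}$ tubes, since this is precisely where the one-dimensionality hypothesis has to defeat the structured concentration of $E_\Gamma g$. A secondary technical point is making the comparison $|E_\Gamma g|\gtrsim w_0|F|$ valid uniformly on all of $B_R$, which is what forces the very narrow bump scale $w_0\sim N^{-2}$ and hence the normalization of $g$.
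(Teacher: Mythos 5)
Your construction is correct and is essentially the paper's own proof: the FRW curve with $\Gamma(\frac{n}{N})\in\frac{1}{N}\mathbb Z$ for $|\mathcal N|\gtrapprox N^{2/3}$ values of $n$, the exponential sum over exactly those frequencies, and a one-dimensional weight supported on small boxes around the grid $(N\mathbb Z)^2\cap B_R$, with the same numerology ($|E_\Gamma g|\sim|\mathcal N|$ on the grid, $\omega(B_R)\sim R$, $\omega(T)\lesssim R^{\frac{1}{2}}$ for every $R\times R^{\frac{1}{2}}$ tube) giving the ratio $|\mathcal N|N^{-\frac{1}{2}}\gtrapprox R^{\frac{1}{12}}$. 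The only difference is presentational: you realize $g$ as genuine $L^2$ bumps of width $\sim N^{-2}$ and verify near-constancy of the model sum by a gradient/periodicity argument, where the paper works directly with the exponential sum under the normalization $\|g\|_2^2\sim R^{-1}\|E_\Gamma g\|_{L^2(B_R)}^2$ and invokes the locally constant property.
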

\begin{proof}
Let $R^{\frac{1}{2}}=N$ be an integer considered in Theorem \ref{FRW}, and let $\Gamma$ be the corresponding convex $C^2$ function. For such $N$ and $\Gamma$, we define $\mathcal{I}$ to be the set of integers $n$ such that \eqref{jarnick curve} holds.

Define 
\[
E_\Gamma g(x,t)= \sum_{n\in \mathcal{I}} e\Big( \frac{n}{N} x+ \Gamma\big(\frac{n}{N}\big)t \Big)
\]
on $B_R$, then
\[
\|g\|_2^2\sim R^{-1} \|E_\Gamma g\|_{L^2(B_R)}^2 \approx  R^{\frac{4}{3}}. 
\]
Here $\approx$ means we have both $\lessapprox$ and $\gtrapprox$. Also, we define $\omega$ to be the characteristic function of the union of unit boxes centered at 
\[
\{(x,t)\in B_R: \frac{x}{N}\in \mathbb Z, \frac{t}{N}\in \mathbb Z\}. 
\]
By \eqref{jarnick curve}, it is easily seen that $|E_\Gamma g|=|\mathcal{I}|\approx R^{\frac{1}{3}}$ at these points, so by the locally constant property, 
\[
\int_{B_R} |E_\Gamma g|^2 \omega \approx R\cdot |\mathcal{I}|^2 \approx R^{\frac{5}{3}}. 
\]
Also, 
\[
\omega(T)\lesssim R^{\frac{1}{2}}
\]
for any $R\times R^{\frac{1}{2}}$ tube $T$. Then by simple computation, 
\[
\int_{B_R} |E_\Gamma g|^2 \omega \gtrapprox R^{\frac{1}{12}} \sup_{T} \omega(T)^{\frac{1}{2}} \|g\|_2^2.
\]
\end{proof}


\begin{thebibliography}{10}

\bibitem{BM} T. F. Bloom and J. Maynard, {\it A new upper bound for sets with no square differences}. Compos. Math. 2022; 158(8):1777-1798.

\bibitem{B} J. Bourgain, {\it Fourier transform restriction phenomena for certain lattice subsets and applications to nonlinear evolution equations Part I: Schr\"odinger equations}. GAFA. 3, 107–156 (1993).

\bibitem{BDdecoupling} J. Bourgain and C. Demeter, {\it The proof of the $l^2$ Decoupling Conjecture}. Ann. of Math. (2), \textbf{182} (2015), 351-389.

\bibitem{CIW} A. Carbery, M. Iliopoulou and H. Wang. {\it Some sharp inequalities of Mizohata-Takeuchi-type}. Rev. Mat. Iberoam. 40 (2024), no. 4, pp. 1387–1418.

\bibitem{FRW} Y. Fu, K. Ren and W. Wang, {\it A note on maximal operators for the Schr\"{o} dinger equation on $\mathbb {T}^ 1$}. arXiv preprint arXiv:2307.12870, 2023.

\bibitem{HC} H. Cairo, {\it A counterexample to the Mizohata-Takeuchi conjecture }.  arXiv: 2502.06137, 2025.

\bibitem{G} L. Guth, {\it An enemy scenario in restriction theory}. Joint talk for AIM Research Community {\it Fourier restriction conjecture and related problems} and HAPPY network (2022),
https://www.youtube.com/watch?v=x-DET83UjFg.

\bibitem{GIOW} L. Guth, A. Iosevich, Y. Ou and H. Wang, {\it On Falconer's distance set problem in the plane}. Invent. Math. \textbf{219,} 779–830 (2020). 

\bibitem{O} A. Ortiz, {\it A sharp weighted Fourier extension estimate for the cone in $\mathbb R^3$ based on circle tangencies}. To appear in J. Anal. Math.. 







\end{thebibliography}
\end{document}